\documentclass{amsart}

\usepackage{amssymb}
\usepackage{amsmath}
\usepackage{amsthm}
\usepackage[dvips]{graphicx}
\usepackage{color}

\newtheorem{theo}{Theorem}[section]
\newtheorem{lemm}[theo]{Lemma}

\newtheorem{coro}[theo]{Corollary}

\newtheorem{rema}[theo]{Remark}

\newcommand\NN{{\mathbb N}}
\newcommand\RR{{\mathbb R}}

\def\SS{\mathbb {S}}

\def\e{\eqno}
\def\la{\langle}
\def\ra{\rangle}

\let\a=\alpha
\let\b=\beta
\let\d=\delta
\let\e=\epsilon
\let\g=\gamma
\let\lam=\lambda

\let\p=\psi

\let\s=\sigma
\let\ve=\varepsilon
\let\vp=\varphi

\let\D=\Delta

\newcommand\bN{{\mathbb N}}

\newcommand\bR{{\mathbb R}}

\newcommand\bS{{\mathbb S}}

\newcommand\cF{{\mathcal F}}

\newcommand\cS{{\mathcal S}}
\newcommand\cM{{\mathcal M}}

\newcommand\cK{{\mathcal K}}
\newcommand\cG{{\mathcal G}}

\let\wt=\widetilde

\let\dis=\displaystyle

\date{today}
\begin{document}
\title[Measure Valued Solutions]
{Moment classification of infinite energy solutions  to the homogeneous Boltzmann equation
}
\author{Yoshinori Morimoto }
\address{Yoshinori Morimoto, Graduate School of Human and Environmental Studies,
Kyoto University,
Kyoto, 606-8501, Japan} \email{morimoto@math.h.kyoto-u.ac.jp}
\author{Shuaikun Wang}
\address{Shuaikun Wang , Department of mathematics, City University of Hong Kong,
Hong Kong, P. R. China
} \email{shuaiwang4-c@my.cityu.edu.hk}
\author{Tong Yang}
\address{Tong Yang, Department of mathematics, City University of Hong Kong,
Hong Kong, P. R. China
} \email{matyang@cityu.edu.hk}

\subjclass[2010]{primary 35Q20, 76P05, secondary  35H20, 82B40, 82C40, }

\keywords{Boltzmann equation,
measure valued solution, moment constraint, characteristic functions.}

\date{}

\begin{abstract}
In this paper, we will introduce a precise classification of characteristic
functions in the Fourier space  according to the
moment constraint in the physical space of any order. Based on this, we 
construct measure valued solutions to the homogeneous Boltzmann equation
with the exact moment condition as the initial data.
\end{abstract}
\maketitle

\section{Introduction}\label{s1}

Consider the spatially homogeneous Boltzmann equation,
\begin{equation}\label{bol}
\partial_t f(t,v) 
=Q(f, f)(t,v),
\end{equation}
where $f(t,v)$ is the density distribution of particles with
velocity $v \in \RR^3$ at time $t$. The most interesting and
important part of this equation is the collision operator
given on the
 right hand side that captures the change rate of the density distribution
through the elastic binary collisions:
\[
Q(g, f)(v)=\int_{\RR^3}\int_{\mathbb S^{2}}B\left({v-v_*},\sigma
\right)
 \left\{g(v'_*) f(v')-g(v_*)f(v)\right\}d\sigma dv_*\,,
\]
where for $\sigma \in \SS^2$
$$
v'=\frac{v+v_*}{2}+\frac{|v-v_*|}{2}\sigma,\,\,\, v'_*
=\frac{v+v_*}{2}-\frac{|v-v_*|}{2}\sigma,\,
$$
that follow from the conservation of momentum and energy,
\[ v' + v_*' = v+ v_*, \enskip |v'|^2 + |v_*'|^2 = |v|^2 + |v_*|^2.
\]

The natural space of the unknown function $f(t,v)$ to the Boltzmann
equation is the space of the probability distribution with suitable
moment constraint that may reflect the boundedness of the momentum
or energy. 

The purpose of this paper is first to give a precise definition of the
space of a probability distribution 
with $\alpha$-order  moment after Fourier transform,
and then construct the measure valued solution to the Boltzmann equation
in such space with the same parameter $\alpha$ for the initial data and the
solution in the setting of Maxwellian type cross-sections.

More precisely, consider \eqref{bol} with initial datum
\begin{equation}\label{initial}
f(0,v) = dF_0\ge 0,
\end{equation}
where $F_0$ is a probability measure.

Motivated by the inverse power law, assume that the non-negative cross section
$B$  takes the form of
\begin{equation*}
B(|v-v_*|, \cos \theta)=\Phi (|v-v_*|) b(\cos \theta),\,\,\,\,\,
\cos \theta=\frac{v-v_*}{|v-v_*|} \, \cdot\,\sigma\, , \,\,\,
0\leq\theta\leq\frac{\pi}{2},
\end{equation*}
where
\begin{align}
&\Phi(|z|)=\Phi_\gamma(|z|)= |z|^{\gamma}, \enskip \mbox{for some $\gamma>-3$},   \notag \\
& b(\cos \theta)\theta^{2+2s}\ \rightarrow K\ \
 \mbox{when} \ \ \theta\rightarrow 0+,  \enskip
\mbox{for $0<s<1$ and $K>0$. }\label{1.2}
\end{align}
Throughout this paper, we will only consider the case when
\[
\Phi (|v-v_*|)=1,
\]
that is called the Maxwellian molecule type cross section. In this case,
 the analysis
relies on the good structure of the equation after taking Fourier transform
in $v$ by the Bobylev formula. And the other cases will be pursued by
the authors in the future.

As usual, the range of $\theta$ can be restricted to $[0,\pi/2]$, by replacing $b(\cos\theta)$
 by its ``symmetrized'' version
\[
[ b(\cos \theta)+b(\cos (\pi-\theta))]{\bf 1}_{0\le \theta\le \pi/2}.
\]


We can work on the problem with the following slightly more general assumption on the cross section
\begin{align}\label{index-sing}
\exists \alpha_0 \in (0, 2]\enskip\mbox{ such that} \enskip  (\sin \theta/2)^{\alpha_0} b(\cos \theta)
\sin \theta  \in L^1((0, \pi/2]),
\end{align}
which is fulfilled for the function $b$ in \eqref{1.2} if $2s < \alpha_0$.

Denote by $P_\alpha(\RR^3)$, $\alpha \in [0,2]$ the set of  probability measure  $F$ on $\RR^3$, such that
\[
\int_{\RR^3} |v|^\alpha dF(v) < \infty, \,
\]
and moreover when  $1 < \a \le 2$, it requires that
\begin{align}\label{mean}
\int_{\RR^3} v_j dF(v) = 0, \enskip j =1,2,3\,.
\end{align}

Following Jacob \cite{Jacob} and Cannone-Karch \cite{Cannone-Karch-1}, call the Fourier transform of a probability measure $F \in P_0(\RR^3)$, that is,
\[
\varphi(\xi) = \hat f(\xi) = \cF(F)(\xi) =\int_{\RR^3} e^{-iv\cdot \xi} dF(v),
\]
a characteristic function.

Put
$\cK = \cF(P_0(\RR^3))$.
Inspired by a series of works by Toscani and his co-authors \cite{Carlen-Gabetta-Toscani, Gabetta-Toscani-Wennberg, toscani-villani}, Cannone-Karch defined a subspace
$\cK^\a$ for $\alpha\ge 0$ as follows:
\begin{align}\label{K-al}
\cK^\alpha =\{ \varphi \in \cK\,;\, \|\varphi - 1\|_{\alpha} < \infty\}\,,
\end{align}
where
\begin{align}\label{dis-norm}
\|\varphi - 1\|_{\alpha} = \sup_{\xi \in \RR^3} \frac{|\varphi(\xi) -1|}{|\xi|^\alpha}.
\end{align}
The space $\cK^\alpha $ endowed with the distance
\begin{align}\label{distance}
\|\varphi - \tilde \varphi\|_{\alpha} = \sup_{\xi \in \RR^3} \frac{|\varphi(\xi) -
\tilde \varphi(\xi) |}{|\xi|^\alpha}
\end{align}
is a complete metric space (see Proposition 3.10 of \cite{Cannone-Karch-1}).
It follows that $\cK^\alpha =\{1\}$ for all $\alpha >2$ and the
following embeddings
(Lemma 3.12 of \cite{Cannone-Karch-1}) hold
\[
\{1\} \subset \cK^\alpha \subset \cK^\beta \subset \cK^0 =\cK \enskip \enskip
\mbox{for all $2\ge \alpha \ge \beta \ge 0$}.
\]

With this classification on the characteristic functions, the global
existence of solution in $\cK^\alpha$ was studied in \cite{Cannone-Karch-1}(see also \cite{morimoto-12}).
However,
even though the inclusion $\cF(P_\a(\RR^3)) \subset \cK^\a$ holds (see Lemma 3.15 of \cite{Cannone-Karch-1}),
the space $\cK^\a$ is strictly larger than $\cF(P_\a(\RR^3))$ for $\a \in (0,2)$, in other word, $\cF^{-1}(\cK^\a) \supsetneq P_\a(\RR^3)$.  Indeed,  it is shown (see Remark 3.16 of \cite{Cannone-Karch-1}) that the function
$\varphi_\a (\xi) = e^{-|\xi|^\a}$, with $\a \in (0,2)$, belongs to $\cK^\a$, but $p_\a(v)= \cF^{-1} (\varphi_\a)(v)$ that is
 the density of $\a$-stable symmetric L\'evy process,
is not contained in $P_\a(\RR^3)$.

On the other hand, we remark that  $\cF(P_2(\RR^3)) = \cK^2$. Indeed, this can
be proved by contradiction. 
If there exists a $\varphi(\xi) \in \mathcal{K}^2$ such that  $F = \mathcal{F}^{-1} (\varphi) \notin P_2$, 
then we may assume there exist $ \omega_0 \in \SS^{d-1} \mbox{and} \enskip A >0 $ such that
\[
\int_{\{|\frac{v}{|v|} -\omega_0| < 10^{-10} \} \cap \{|v| \le A\}} |v|^2 dF(v) \ge 100\|1-\vp\|_2\,,\]
from which we have a contradiction because 
\begin{align*}
&\|1-\varphi\|_2 \ge \sup_{\xi} \frac{\mbox{Re}(1-\varphi(\xi))}
{|\xi|^2}\\
&\ge 2 
\int_{\{|\frac{v}{|v|} -\omega_0| < 10^{-10} \} \cap \{|v| \le A\}} 
\frac{\sin^2 \big\{\frac{ |v||\xi|}{2} \big(\frac{v}{|v|} \cdot \frac{\xi}{|\xi|}\big) \big\}}
{|v|^2|\xi|^2} |v|^2 dF(v) \enskip \mbox{for  $\displaystyle \frac{\xi}{|\xi|} = \omega_0$, $\displaystyle|\xi| = \frac{\pi}{A}$}\\
&\ge \frac{2}{\pi^2}
\int_{\{|\frac{v}{|v|} -\omega_0| < 10^{-10} \} \cap \{|v| \le A\}}  \big(\frac{v}{|v|}\cdot \omega_0 \big)^2
 |v|^2 dF(v) > 50\|1-\vp\|_2,
\end{align*}
by using 
\[
\sin z \ge \frac{2z}{\pi} \enskip \mbox{when} \enskip 0 \le  z \le \frac{\pi}{2}\,.
\]

In order to capture the precise moment constraint in the Fourier
space,  another classification on the characteristic functions
was introduced in \cite{MWY-2014} as follows:
\begin{align}\label{M-al}
\cM^\alpha =\{ \varphi \in \cK\,;\, \|\varphi - 1\|_{\cM^\alpha} < \infty\}\,,\enskip \a \in (0,2)\,,
\end{align}
where
\begin{align}\label{integral-norm}
\|\varphi -1\|_{\cM^\a}  = \int_{\RR^3} \frac{|\varphi(\xi)-1 |}{|\xi|^{3+\a} }d\xi\,.
\end{align}
It was shown  in \cite{MWY-2014} that if $\a\in(0,1)\cap(1,2)$, then $\cM^\a=\cF(P_\a)$. However, for the case $\a=1$,  
$\cM^1\subsetneq\cF(P_1)$.

To give a more precise description of the characterization of $P_\a$ of any
order, 
in this paper, we first introduce 
\begin{align}\label{tilde-M-al}
\wt\cM^\alpha =\{ \varphi \in \cK\,;\, \|Re\varphi - 1\|_{\cM^\alpha}+||\vp-1||_\a < \infty\}\,,\enskip \a \in (0,2)\,,
\end{align}
where $Re\vp$ stands for the real part of $\vp(\xi)$. 
Accordingly,  the imaginary part of $\vp(\xi)$ is denoted by $Im\vp$.

For $\varphi, \tilde \varphi \in \wt\cM^\a$, put
\begin{align*}
\|\varphi -\tilde \varphi \|_{\wt\cM^\a}  = \int_{\RR^3} \frac{|Re\varphi(\xi)-Re\tilde \varphi(\xi) |}{|\xi|^{3+\a} }d\xi,
\end{align*}
and, for any $0<\b<\a<2, {0< \e <1}$, we introduce the distance in $\wt\cM^\a$
as
\begin{align}\label{M-dist}
{dis}_{\a,\b,\e} (\varphi ,\tilde \varphi )= \|\varphi - \tilde \varphi\|_{\wt\cM^\a}+ \|\varphi - \tilde \varphi\|_\b+\|\varphi - \tilde \varphi\|_\b^\e.
\end{align}

With the above preparation,
 the first main result in this paper can be stated as follows.

\begin{theo}\label{complete-1}  If $0 < \b < \alpha <\g\le 2, { 0<\e<1}$, then
the space $\wt\cM^\a$ is a complete metric space endowed with the distance $
dis_{\a,\b,\e} (\cdot ,\cdot)$. 
Moreover, we have
\begin{align}\label{trivial-inclusion}
&\cK^\g \subset \wt\cM^\a\subset \cK^\a\subset\cK^\b, \\
&\label{equivalence-space}
\quad\wt\cM^\a = \cF(P_\a(\RR^3)).
\end{align}
Furthermore, $\displaystyle \lim_{n \rightarrow \infty} dis_{\a,\b,\e}(\varphi_n, \varphi) = 0$, for $\varphi_n,
\varphi
 \in \wt\cM^\a$,
implies
\begin{align}\label{measure-convergence}
&\lim_{n \rightarrow \infty} \int \psi(v) dF_n(v) = \int \psi(v) dF(v)  
\mbox{ for any  $\psi \in C(\RR^3)$} \\
&\mbox{\qquad  \qquad \qquad
 satisfying
the growth condition $|\psi(v)| \lesssim \la v \ra^\a$,}\notag
\end{align}
where $F_n = \cF^{-1}(\varphi_n), F = \cF^{-1}(\varphi) \in P_\a(\RR^3)$.
\end{theo}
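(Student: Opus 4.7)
The plan is to establish the four assertions of the theorem in turn: the inclusions (\ref{trivial-inclusion}), the identification (\ref{equivalence-space}), completeness of $(\wt\cM^\a, dis_{\a,\b,\e})$, and the measure convergence (\ref{measure-convergence}). The backbone of the entire argument is the well-known L\'evy--Khintchine identity
\[
\int_{\RR^3} \frac{1-\cos(v\cdot \xi)}{|\xi|^{3+\a}}\, d\xi = c_\a |v|^\a, \qquad \a \in (0,2),
\]
which converts the weighted $L^1$ norm of $1-Re\vp$ into the $\a$-moment of the underlying measure. The three inclusions in (\ref{trivial-inclusion}) are then routine: $\wt\cM^\a \subset \cK^\a$ is built into the definition, $\cK^\a \subset \cK^\b$ is Lemma~3.12 of \cite{Cannone-Karch-1}, and $\cK^\g \subset \wt\cM^\a$ for $\g>\a$ follows by splitting $\RR^3$ into $\{|\xi|\le 1\}$ (where $|1-Re\vp|\le \|\vp-1\|_\g|\xi|^\g$ leaves a factor $|\xi|^{\g-\a-3}$ integrable near the origin since $\g>\a$) and $\{|\xi|>1\}$ (where $|1-Re\vp|\le 2$ leaves $|\xi|^{-\a-3}$ integrable at infinity since $\a>0$).

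For the identification (\ref{equivalence-space}), the forward inclusion $\cF(P_\a)\subset\wt\cM^\a$ combines Lemma~3.15 of \cite{Cannone-Karch-1} (supplying $\|\vp-1\|_\a<\infty$) with Tonelli's theorem applied to $1-Re\vp(\xi)=\int(1-\cos(v\cdot\xi))\,dF(v)$ and the key identity, yielding $\|1-Re\vp\|_{\cM^\a}=c_\a\int|v|^\a dF<\infty$. The reverse inclusion is where the $\|\vp-1\|_\a$ component of the definition of $\wt\cM^\a$ is crucial: for $\vp\in\wt\cM^\a$ with $F=\cF^{-1}\vp$, the same Tonelli argument (justified by nonnegativity of $1-\cos$) gives $\int|v|^\a dF = c_\a^{-1}\|1-Re\vp\|_{\cM^\a}<\infty$, so $F$ has finite $\a$-moment. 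When $\a>1$ the first moment $m=\int v\,dF$ exists and $\vp$ is differentiable at $0$ with $\vp(\xi)=1-im\cdot\xi+o(|\xi|)$; combined with the pointwise bound $|\vp(\xi)-1|\le\|\vp-1\|_\a|\xi|^\a=o(|\xi|)$, this forces $m=0$, so $F\in P_\a$.

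For completeness, given a Cauchy sequence $\{\vp_n\}$ in $dis_{\a,\b,\e}$, the $\|\cdot\|_\b$ component produces, via Proposition~3.10 of \cite{Cannone-Karch-1}, a limit $\vp\in\cK^\b$ with $\vp_n\to\vp$ pointwise; Fatou's lemma against $|\xi|^{-3-\a}d\xi$ then yields $\|1-Re\vp\|_{\cM^\a}<\infty$ and $\|Re\vp_n-Re\vp\|_{\cM^\a}\to 0$. To upgrade this to $\vp\in\wt\cM^\a$ I show $F=\cF^{-1}\vp\in P_\a$ (finite $\a$-moment from the identity, and for $\a>1$ the mean-zero condition because $\sup_n\int|v|^\a dF_n<\infty$ yields uniform integrability of $|v|$, so the L\'evy convergence $F_n\to F$ upgrades to convergence of first moments) and then invoke the already-established forward direction of (\ref{equivalence-space}). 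Finally, (\ref{measure-convergence}) follows because convergence in $dis_{\a,\b,\e}$ simultaneously provides weak convergence $F_n\to F$ and convergence of the $\a$-moments, from which a standard uniform-integrability argument extends convergence of integrals to every continuous $\psi$ with $|\psi(v)|\lesssim\la v\ra^\a$. I expect the main obstacle to be the recovery of the mean-zero condition in the reverse direction of (\ref{equivalence-space}) for $\a\in(1,2)$: this is precisely what motivated the addition of $\|\vp-1\|_\a$ to the definition of $\wt\cM^\a$ in (\ref{tilde-M-al}), since the $\cM^\a$-norm of $Re\vp$ alone is blind to first-moment information, and the pointwise $\cK^\a$-control on $\vp-1$ near the origin is what eliminates a nonzero mean.
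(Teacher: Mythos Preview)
Your proof is correct and follows the same architecture as the paper's: the L\'evy--Khintchine identity is the backbone in both, and the inclusions, the identification, and the measure convergence are handled along the same lines. There are two places where your route differs. For the mean-zero condition in the reverse inclusion $\wt\cM^\a\subset\cF(P_\a)$ when $\a>1$, the paper argues by contradiction (translating $F$ by its mean $a$ forces $e^{-i\xi\cdot a}\in\cK^\a$, which is impossible for $a\neq 0$), whereas you use differentiability of $\vp$ at the origin together with the $\cK^\a$ bound $|\vp(\xi)-1|=O(|\xi|^\a)=o(|\xi|)$ to read off $m=0$ directly; both are short and equally valid. For completeness, your treatment is in fact more complete than the paper's: the paper stops after showing $\|1-Re\vp\|_{\cM^\a}<\infty$ and simply asserts $\vp\in\wt\cM^\a$, whereas you close the remaining condition $\|\vp-1\|_\a<\infty$ explicitly by routing through the already-proved identification $\wt\cM^\a=\cF(P_\a)$ (including the mean-zero step via uniform integrability of $|v|$), and you also note that Fatou gives $\|\vp_n-\vp\|_{\wt\cM^\a}\to 0$, which the paper leaves implicit. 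Your measure-convergence argument (weak convergence plus convergence of $\a$-moments gives uniform integrability of $|v|^\a$) is a clean substitute for the paper's appeal to the tail-tightness estimate of Proposition~2.2 in \cite{MWY-2014}.
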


\begin{rema}
	If $\a\in(0,1)\cup(1,2)$, then $\wt\cM^\a=\cM^\a$. If $\a=1$ , $\wt\cM^\a\supsetneq\cM^\a$.
\end{rema}

\begin{proof}
Firstly, for $F\in P_\a,\a\in(0,2)$, denote the Fourier transform of $F$ by $\vp$, then $1-Re\vp=\int_v (1-\cos(\xi\cdot v))dF$. Hence
	\begin{align}\label{base-equiv}
	\int\frac{|1-Re\vp|}{|\xi|^{3+\a}}d\xi=
	\iint\frac{1-\cos(\xi\cdot v)}{|\xi|^{3+\a}}dFd\xi=2\int\frac{\sin^2({\zeta\cdot\s}/2)}{|\zeta|^{3+\a}}d\zeta\int|v|^\a dF.
	\end{align}
	It is proved in \cite{Cannone-Karch-1} that $\cF(P_\a)\subset\cK^\a$. This inclusion and \eqref{base-equiv} show $\wt\cM^\a\supset\cF(P^\a)$. To prove $\wt\cM^\a\subset\cF(P^\a)$, it suffices to show:
	$$\mbox{ for any $\vp\in\wt\cM^\a$, if $\a>1$ , then $F=\cF^{-1}(\vp)$ satisfies \eqref{mean}}.$$
	Assume there exists $\vp\in\wt\cM^\a\subset\cK^\a$, such that
	$a=\int vdF\neq0,\,F=\cF^{-1}(\vp)$. Since
	$F(\cdot+a)\in P_\a$, we know $e^{i\xi\cdot a}\vp(\xi)=\vp_a(\xi)=\cF(F(\cdot+a))\in\cK^\a$. Therefore, we have
	\[\begin{split}
	\sup_\xi\frac{|e^{-i\xi\cdot a}-1|}{|\xi|^\a}&=\sup_\xi\frac{|1-e^{i\xi\cdot a}|}{|\xi|^\a}\\
	&\le\sup_\xi\frac{|1-\vp|}{|\xi|^\a}+\sup_\xi\frac{|\vp-e^{i\xi\cdot a}|}{|\xi|^\a}\\
	&=\sup_\xi\frac{|1-\vp|}{|\xi|^\a}+\sup_\xi\frac{|\vp_a-1|}{|\xi|^\a}<\infty.
	\end{split}\]
	 This gives a contradiction to the fact that if $\a>1,a\neq0$, then $e^{-i\xi\cdot a}\notin\cK^\a$.
	 The inclusion (\ref{trivial-inclusion}) follows from
	\begin{align*}
	\int\frac{|1-Re\vp|}{|\xi|^{3+\a}}d\xi
	\le||1-\vp||_\g\int_{|\xi|<1}\frac{1}{|\xi|^{3+\a-\g}}d\xi+2\int_{|\xi|\ge 1}\frac{1}{|\xi|^{3+\a}}d\xi.
	\end{align*}

	 Secondly,  let $\{\vp_n\}_{n=1}^\infty$ be a Cauchy sequence in $\{\wt\cM^\a,dis_{\a,\b,\e}(\cdot,\cdot)\}$. Then, there exists $N>0$, such that
	$$dis_{\a,\b,\e}(\vp_N,\vp_n)<1,\mbox{ for any }n>N.$$
	As $\{\vp_n\}_{n=1}^\infty$ is also a Cauchy sequence in the complete space $\{\cK^\b,||\cdot||_\b\}$, there exists $\vp\in\cK^\b$ such that
	$$
	||\vp_n-\vp||_\b=\sup_\xi\frac{|\vp_n-\vp|}{|\xi|^\b}\to 0,\mbox{ as }n\to\infty.
	$$
	Then, for any $ \d\in(0,1)$,
	$$
	\int_{\d<|\xi|<\d^{-1}}\frac{|Re\vp-Re\vp_n|}{|\xi|^{3+\a}}d\xi
	\le||\vp-\vp_n||_\b\int_{\d<|\xi|<\d^{-1}}\frac{1}{|\xi|^{3+\a-\b}}d\xi\to0,\mbox{ as }n\to\infty.
	$$ 
	Notice that
	\begin{align*}
	&\int_{\d<|\xi|<\d^{-1}}\frac{|Re\vp-1|}{|\xi|^{3+\a}}d\xi\\
	&\leq\int_{\d<|\xi|<\d^{-1}}\frac{|Re\vp-Re\vp_n|}{|\xi|^{3+\a}}d\xi
	+\int_{\d<|\xi|<\d^{-1}}\frac{|Re\vp_n-Re\vp_N|}{|\xi|^{3+\a}}d\xi\\
	&\quad+\int_{\d<|\xi|<\d^{-1}}\frac{|Re\vp_N-1|}{|\xi|^{3+\a}}d\xi.
	\end{align*}
Then by letting $n\to\infty$, we have 
	\begin{align*}
	\int_{\d<|\xi|<\d^{-1}}\frac{|Re\vp-1|}{|\xi|^{3+\a}}d\xi
	\le 1+\int_{\d<|\xi|<\d^{-1}}\frac{|Re\vp_N-1|}{|\xi|^{3+\a}}d\xi
	\le 1+\int\frac{|Re\vp_N-1|}{|\xi|^{3+\a}}d\xi.
	\end{align*}
	Since $\d\in(0,1)$ is arbitrary, we obtain $\vp\in\wt\cM^\a$.

	Finally, suppose that for $F_n, F \in P_\a(\RR^3)$,  we have
	$$
	\varphi_n = \cF(F_n), \varphi = \cF(F) \in \wt\cM^\a,\enskip \mbox{and} \enskip
	\displaystyle \lim_{n \rightarrow \infty} dis_{\a,\b,\e}(\varphi_n, \varphi) = 0\,.
	$$
	Note that for $R >1$
	\[
	\int_{\{|\xi| \le 1/R\}} \frac{|1-Re\varphi_n (\xi)|}{|\xi|^{3+\alpha}} d \xi
	\le \int_{\{|\xi| \le 1/R\}} \frac{|1-Re\varphi(\xi)|}{|\xi|^{3+\alpha}} d \xi  + \|\varphi_n -\varphi\|_{\wt\cM^\a} \,,
	\]
	then it follows from the proof of Proposition 2.2 in \cite{MWY-2014} that for any $\varepsilon_1 >0$ there exist $R >1$ and $N \in \NN$ such that
	\[
	\int_{\{|v| \ge R \}} |v|^\alpha dF_n(v) +  \int_{\{|v| \ge R \}} |v|^\alpha dF(v) < \varepsilon_1\, \enskip \mbox{if $n \ge N$}.
	\]
	This shows \eqref{measure-convergence} because $\varphi_n \rightarrow \varphi$ in $\cS'(\RR^3)$, and hence, 
	$F_n \rightarrow F$ in $\cS'(\RR^3)$.	
\end{proof}

Hence, $\wt{\cM}^\alpha$ represents precisely $P_\alpha$ in the Fourier
space for $\alpha\in (0,2)$. 
It then leads to a question about how to describe the
subspace in $\cK$ corresponding to  $P_\a$ with $\a>2$. For this,
we introduce the following spaces.

For each $n\ge1(n\in\bN),\a\in(0,2]$, denote 
	\begin{align*}
		 \wt P_{2n+\a}(\bR^3)=&\{F \in P_0 ;  \enskip  \frac{(1+|v|^{2})^n F}{\int(1+|v|^{2})^ndF}\in P_\a(\bR^3)\} .
	\end{align*}
	It should be noted that if $\a >1$ then the above definition requires  
	\begin{equation}
		\int_{\bR^3}v_j(1+|v|^{2})^ndF=0,j=1,2,3\,.\label{zero-momen}
	\end{equation}
We  can then characterize $ \wt P_{2n+\a}$ exactly by using
the space $\wt\cM^\a$ obtained above.

\begin{coro}
	Let $n\in\bN,n\ge1$, then we have the following characterization: if $\a\in(0,2)$
	$$\cF( \wt P_{2n+\a})=(1-\D)^{-n}\wt \cM^\a;$$
	If $\a=2$, 
	$$\cF( \wt P_{2n+2})=(1-\D)^{-n}\cK^2,$$
	where $\D$ is the Laplace operator and the space $(1-\D)^{-n}S$ is defined as:
		\begin{align*}
			&\mbox{  $\vp(\xi) \in (1-\D)^{-n} S$, if there exists $\p(\xi)\in C_b(\bR^3)$ }\\
			&\quad \qquad \qquad \mbox{such that $\p=(1-\D)^{n}\vp $ and $\displaystyle 
				\frac{\p(\xi)}{\p(0)} \in S$.}
		\end{align*}
\end{coro}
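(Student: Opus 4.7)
The plan is to unwind the definitions of $\wt P_{2n+\alpha}$ and $(1-\Delta)^{-n}S$ and exploit the Fourier correspondence between multiplication by $(1+|v|^{2})^{n}$ on the physical side and the operator $(1-\Delta)^{n}$ on the frequency side: for a probability measure $F$ with Fourier transform $\varphi=\cF(F)$, the distributional identity
\[
(1-\Delta)^{n}\varphi \;=\; \cF\bigl((1+|v|^{2})^{n}F\bigr)
\]
shows that $(1-\Delta)^{n}\varphi\in C_{b}(\RR^{3})$ if and only if $(1+|v|^{2})^{n}F$ is a finite positive measure, whose total mass is precisely $\bigl((1-\Delta)^{n}\varphi\bigr)(0)=M_{n}:=\int(1+|v|^{2})^{n}dF$. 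Setting $G:=M_{n}^{-1}(1+|v|^{2})^{n}F$ and $\psi:=(1-\Delta)^{n}\varphi$, one then has $\cF(G)=\psi/\psi(0)$, and the zero-mean condition \eqref{zero-momen} on $F$ (for $\alpha>1$) is exactly the condition \eqref{mean} on $G$.

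For the forward inclusion $\cF(\wt P_{2n+\alpha})\subset(1-\Delta)^{-n}\wt\cM^{\alpha}$, I take $F\in\wt P_{2n+\alpha}$, so that $G\in P_{\alpha}$ by definition. Theorem \ref{complete-1} (for $\alpha\in(0,2)$), or the identity $\cF(P_{2})=\cK^{2}$ established in the introduction (for $\alpha=2$), then gives $\psi/\psi(0)=\cF(G)\in\wt\cM^{\alpha}$, respectively $\cK^{2}$, whence $\varphi\in(1-\Delta)^{-n}\wt\cM^{\alpha}$ (respectively $(1-\Delta)^{-n}\cK^{2}$). For the reverse inclusion, let $\varphi=\cF(F)\in(1-\Delta)^{-n}\wt\cM^{\alpha}$ be a characteristic function. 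The hypothesis $\psi:=(1-\Delta)^{n}\varphi\in C_{b}$, combined with the displayed identity, shows that the positive tempered distribution $(1+|v|^{2})^{n}F$ has bounded continuous Fourier transform, hence is a finite measure of mass $\psi(0)$; thus $M_{n}=\psi(0)<\infty$. The normalized probability measure $G$ then satisfies $\cF(G)=\psi/\psi(0)\in\wt\cM^{\alpha}$, and Theorem \ref{complete-1} (which for $\alpha>1$ automatically supplies \eqref{mean}, equivalently \eqref{zero-momen}) yields $G\in P_{\alpha}$, so $F\in\wt P_{2n+\alpha}$. The case $\alpha=2$ is handled identically, using $\cF(P_{2})=\cK^{2}$ in place of Theorem \ref{complete-1}.

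The one analytic point, standard but worth isolating, is the equivalence ``a positive tempered distribution is a finite measure if and only if its distributional Fourier transform is a bounded continuous function'' — this is what lets us pass from the abstract hypothesis $(1-\Delta)^{n}\varphi\in C_{b}$ back to the finiteness of the $2n$-th moment of $F$ and to the explicit value $M_{n}=\psi(0)$. Beyond that, the proof is a clean algebraic translation of the defining properties of $\wt P_{2n+\alpha}$ and $(1-\Delta)^{-n}$ across the Fourier transform, with all the genuine analytic content concentrated in Theorem \ref{complete-1} and in the identity $\cF(P_{2})=\cK^{2}$ already proved in the introduction.
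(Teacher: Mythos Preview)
Your proof is correct and follows essentially the same route as the paper: both directions reduce, via the Fourier identity $(1-\Delta)^{n}\varphi = \cF\bigl((1+|v|^{2})^{n}F\bigr)$, to the equivalence $\wt\cM^{\alpha}=\cF(P_{\alpha})$ from Theorem~\ref{complete-1} (respectively $\cK^{2}=\cF(P_{2})$ for $\alpha=2$). Your explicit isolation of the fact that a positive tempered distribution is a finite measure if and only if its Fourier transform lies in $C_{b}$ is a useful clarification that the paper leaves implicit.
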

\begin{proof}
	For any $F\in \wt P_{2n+\a}$, since $\frac{\big( 1+|v|^{2}\big)^n dF}{\int\big(1+|v|^{2}\big)^ndF}\in P_\a(\bR^3)$, we have 
	$$
	\cF\left(\frac{(1+|v|^{2})^n dF}{\int (1+|v|^{2})^n dF}\right)=\frac{(1-\D)^{n}\cF (dF)}{\int (1+|v|^{2})^ndF}\in\wt\cM^\a.
	$$
	Hence, $\cF(dF)\in(1-\D)^{-n}\wt\cM^\a$. Inversely, if $\vp\in(1-\D)^{-n}\wt\cM^\a$, by the definition, there exists $\p \in C_b(\bR^3)$ 
	such that $\p=(1-\D)^n\vp$ and $\p/ \p(0) \in\wt\cM^\a$. Then,
	$$
	P_\a \ni \frac{\cF^{-1}(\p)}{\p(0)}=\frac{\cF^{-1}((1-\D)^n\vp)}
	{\p(0)}=\frac{\big(1+|v|^{2}\big)^n \cF^{-1}(\vp) }{ \int \big(1+|v|^{2}\big)^n  d \cF^{-1}(\vp)(v)}.
	$$
	Moreover, if $\a>1$, 
	$$
	\int_{\bR^3}v_j d \cF^{-1}(\p)(v)=\int_{\bR^3}v_j\big(1+|v|^{2}\big)^nd \cF^{-1}(\vp)(v)=0,\enskip j=1,2,3.
	$$
	This shows $\cF^{-1}(\vp)\in \wt P_{2n+\a}$.
\end{proof}
\begin{rema}
	It is worth to remark that $\cK^2\subsetneq( 1-\D)^{-1}\cK^0$, because 
	the zero moment condition   \eqref{zero-momen} is not
	assumed for the space $\cK^0$.	
\end{rema}
Thanks to the new characterization of $P_\a$ for
any $\alpha\in(0,2)$ by its exact Fourier image $\wt\cM^\a$, we can improve 
the previous results, given in
\cite{Cannone-Karch-1, morimoto-12, MY,MWY-2014},  concerning the existence and the smoothing effect of measure valued
solutions to the Cauchy problem for the
spatially homogeneous Boltzmann equation with the Maxwellian molecule type cross section without angular cutoff.
The results will be stated in the following theorems.

\begin{theo}\label{existence-base-space}
Assume that $b$ satisfies \eqref{index-sing} for some $\a_0 \in (0,2)$ and let  $\a\in(\a_0,2)$.
If $F_0 \in P_\a(\RR^3)$, then there exists a unique measure valued solution $F_t \in C([0,\infty), P_\a(\RR^3))$ to the Cauchy problem
\eqref{bol}-\eqref{initial}, where the continuity with respect to $t$ is in
 the topology defined in  \eqref{measure-convergence}.
\end{theo}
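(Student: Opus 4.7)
The strategy is to build on the existence theory already available in the larger space $\cK^\a$ from \cite{Cannone-Karch-1, morimoto-12, MWY-2014}, and then to use Theorem \ref{complete-1} to upgrade membership from $\cK^\a$ to $\wt\cM^\a = \cF(P_\a(\bR^3))$. Since $F_0 \in P_\a(\bR^3)$, \eqref{equivalence-space} gives $\hat F_0 \in \wt\cM^\a \subset \cK^\a$, so the cited results produce a unique $\vp(t,\xi) \in C([0,\infty), \cK^\a)$ solving the Bobylev-transformed equation associated to \eqref{bol} with $\vp(0,\cdot) = \hat F_0$. It then remains to show $\vp(t,\cdot) \in \wt\cM^\a$ for every $t \ge 0$, with uniform estimates on compact time intervals, and that continuity holds in the stronger distance $dis_{\a,\b,\e}$.

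The natural tool is approximation by angular cutoff. Set $b_n = b\,\mathbf{1}_{\{\theta \ge 1/n\}}$ and let $\vp_n(t,\cdot) = \cF(F_n(t,\cdot))$ denote the corresponding cutoff solutions, for which $F_n(t,\cdot)$ is a genuine probability measure by classical Cauchy theory. The pivotal identity is \eqref{base-equiv}; applied to $F_n(t,\cdot)$ it reads
\[
\|1 - Re\,\vp_n(t)\|_{\cM^\a} = C_\a \int_{\bR^3} |v|^\a\, dF_n(t,v),
\]
so uniform-in-$n$ control of the Fourier-side norm is \emph{equivalent} to propagation of the $\a$-moment of $F_n(t,\cdot)$. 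For Maxwellian molecules, this propagation follows from testing the weak form of the equation against $|v|^\a$ and applying a Povzner-type angular inequality: under \eqref{index-sing} with $\a > \a_0$ one obtains a bound of the form $\int |v|^\a dF_n(t,v) \le e^{Kt}\int |v|^\a dF_0(v)$ with $K$ independent of $n$.

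Passing to the limit $n \to \infty$, the pointwise convergence $\vp_n(t,\xi) \to \vp(t,\xi)$, inherited from the stability of the $\cK^\a$-theory with respect to the cutoff parameter, combined with Fatou's lemma yields $\vp(t,\cdot) \in \wt\cM^\a$ with the same uniform bound. When $\a > 1$, preservation of the zero-mean condition \eqref{mean} along the flow follows from conservation of momentum for the Maxwellian-molecule equation. Continuity of $t \mapsto \vp(t,\cdot)$ in the distance $dis_{\a,\b,\e}$ then reduces, via \eqref{base-equiv} and Theorem \ref{complete-1}, to continuity of the map $t \mapsto \int |v|^\a dF_t(v)$, which follows from the weak evolution of moments together with \eqref{measure-convergence}. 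Uniqueness in $C([0,\infty), P_\a(\bR^3))$ is then inherited from the uniqueness in the strictly larger space $C([0,\infty), \cK^\a)$ already established in the cited works.

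The main obstacle is the uniform-in-$n$ propagation of the fractional $\a$-moment under only the weak singularity assumption \eqref{index-sing}: one must quantify that the angular-averaged gain-minus-loss contribution is dominated by a constant multiple of the moment itself, which for non-integer $\a$ calls for a careful Povzner-type estimate, splitting the integration according to whether the pre-collisional relative speed is small or large. A secondary subtlety is right-continuity at $t=0$ in the $\wt\cM^\a$-topology, which is strictly stronger than the $\cK^\a$-continuity provided by the earlier theory and requires a direct estimate on $\tfrac{d}{dt}\int |v|^\a dF_t$ near the initial time, obtained by inserting $|v|^\a$ into the weak formulation and bounding the resulting collision integral by the moment itself via \eqref{index-sing}.
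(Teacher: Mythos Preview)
Your overall strategy is sound and essentially correct, but it differs from the paper's route. The paper does \emph{not} pass through a physical-space Povzner inequality at all. Instead it proves the Fourier-side Theorem~\ref{fourier-space} directly: the key technical step is Lemma~\ref{lemm01}, which bounds $\|\cG_n(\vp)-\cG_n(\tilde\vp)\|_{\wt\cM^\a}$ by splitting $Re(\vp^+\vp^-)$ into $Re\vp^+Re\vp^- - Im\vp^+Im\vp^-$ and estimating the two pieces separately (the $Re$ piece via the $\cM^\a$-calculus of \cite{MWY-2014}, the $Im$ piece via $\cK^\b$-norms with an interpolation exponent $\e$). From this lemma one gets a contraction in $(\wt\cM^\a,dis_{\a,\b,\e})$ for the cutoff problem, a Gronwall-type stability estimate, and then the non-cutoff limit and the quantitative continuity bounds \eqref{fourier-continuity-1}--\eqref{fourier-continuity-2} follow; Theorem~\ref{existence-base-space} is then a two-line corollary via Theorem~\ref{complete-1}. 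What the paper's approach buys is the explicit stability estimate \eqref{fourier-stability} between two solutions, which your physical-side argument does not produce.

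Your approach buys simplicity: since for Maxwellian molecules the azimuthal average and the concavity of $x\mapsto x^{\a/2}$ give $\int_{\SS^2}b_n[|v'|^\a+|v'_*|^\a-|v|^\a-|v_*|^\a]\,d\sigma \le \lam_\a(|v|^\a+|v_*|^\a)$ directly (no splitting on the ``pre-collisional relative speed'' is needed here---that remark is misplaced for $\Phi\equiv 1$), the moment propagation and hence, via \eqref{base-equiv} and Fatou, membership $\vp(t)\in\wt\cM^\a$ are immediate.

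One point needs tightening. Your continuity step reads as circular: you invoke \eqref{measure-convergence} (which is the \emph{conclusion} of $dis_{\a,\b,\e}$-convergence in Theorem~\ref{complete-1}) to obtain continuity of $t\mapsto \int|v|^\a dF_t$, in order to prove $dis_{\a,\b,\e}$-continuity. The correct mechanism is Scheff\'e's lemma: from $\cK^\b$-continuity you already have $Re\,\vp(t,\xi)\to Re\,\vp(s,\xi)$ pointwise, and $(1-Re\,\vp(t,\xi))/|\xi|^{3+\a}\ge 0$ with $\int (1-Re\,\vp(t))/|\xi|^{3+\a}\,d\xi=C_\a\int|v|^\a dF_t$; hence continuity of the moment (proved by testing the weak equation against truncations of $|v|^\a$ and using the uniform moment bound to pass to the limit) implies $L^1$-convergence of the integrand, i.e.\ $\|\vp(t)-\vp(s)\|_{\wt\cM^\a}\to 0$. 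With that correction your argument goes through.
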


\begin{rema}
	Assume the initial data $F_0\in \wt P_{2n+\a}$ with $\alpha\in (0,2]$.
Since $F_0$ may belong
to $P_\a$ up to the translation when $\alpha >1$, by Theorem \ref{existence-base-space}, we can obtain the corresponding solution $F(t)\in P_\a(\bR^3)$. However, since
$v_j \la v\ra^{2n}$ with $n\ge 1$ is not  a collision invariant, we can not
expect the condition  \eqref{zero-momen} on the initial data
can propagate in time. Hence, the solution in general does not belong to $ \wt P_{2n+\a}$ for $1<\alpha\le 2$.
\end{rema}

\begin{coro}\label{propagate-coro}
Assume that $b$ satisfies \eqref{index-sing},  $n\ge1,n\in\bN,\a\in(0,2]$.
Let $F_t$ be the measure valued solution to the Cauchy problem
\eqref{bol}-\eqref{initial} with respect to the initial data $F_0\in P_0(\bR^3)$  satisfying
$$
\int|v|^{2n+\a}dF_0<\infty.
$$
Then, for any $T>0$, there exists $C>0$ such that
\begin{align}\label{propagate}
\int|v|^{2n+\a}dF_t\le Ce^{CT}\int|v|^{2n+\a}dF_0,
\end{align}
for any $t\in[0,T]$.
\end{coro}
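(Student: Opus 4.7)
The plan is to derive a Gronwall-type differential inequality for the moment $M_p(t) := \int_{\bR^3} |v|^p\, dF_t(v)$ with $p = 2n+\alpha > 2$, and then integrate. Since a priori we only know $M_p(0) < \infty$, I would first regularize the test function, replacing $|v|^p$ by a bounded truncation $\psi_R(v) = \min(|v|^p, R)$ (or a smooth analogue), apply the bound uniformly in $R$, and pass to the limit $R \to \infty$ via monotone convergence. The weak formulation of \eqref{bol}, symmetrized in $(v, v_*)$, then gives
\[
\frac{d}{dt}\int \psi_R\, dF_t = \frac{1}{2}\iiint_{\bR^3 \times \bR^3 \times \SS^2} b(\cos\theta)\bigl\{\psi_R(v') + \psi_R(v_*') - \psi_R(v) - \psi_R(v_*)\bigr\}\, d\sigma\, dF_t(v_*)\, dF_t(v).
\]

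The crucial step is a Povzner-type pointwise estimate of the angular integrand: for $p > 2$ and $\psi(v) = |v|^p$,
\[
\int_{\SS^2} b(\cos\theta)\bigl\{|v'|^p + |v_*'|^p - |v|^p - |v_*|^p\bigr\}\, d\sigma \le C_p\bigl(|v|^2|v_*|^{p-2} + |v|^{p-2}|v_*|^2\bigr).
\]
I would obtain this by a second-order Taylor expansion around $(v,v_*)$, using the energy identity $|v'|^2 + |v_*'|^2 = |v|^2 + |v_*|^2$: the first-order-in-$\sigma$ contribution integrates to zero by the parity $\sigma \mapsto -\sigma$ (restricted to $\theta \in [0,\pi/2]$ via the symmetrized $b$ already introduced in Section \ref{s1}), and the remainder carries the factor $\sin^2(\theta/2) = |v'-v|^2/|v-v_*|^2$, which renders the angular integral finite thanks to \eqref{index-sing} since $\alpha_0 \le 2$. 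The same pointwise bound transfers to $\psi_R$ with constants independent of $R$ by a standard approximation.

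Inserting this estimate into the weak form, invoking conservation of mass ($M_0(t) \equiv 1$) and conservation of energy for Maxwellian molecules ($M_2(t) = M_2(0)$, which is available because $p > 2$ forces $M_2(0) \le M_p(0)^{2/p} < \infty$), and interpolating $M_{p-2}(t) \le 1 + M_p(t)$, one arrives at
\[
\frac{d}{dt} M_p(t) \le C\bigl(1 + M_p(t)\bigr),
\]
from which \eqref{propagate} follows by Gronwall's lemma. The hard part is the Povzner estimate in the non-cutoff regime: individually, $|v'|^p - |v|^p$ behaves like $|v-v_*|\sin(\theta/2)\, |v|^{p-1}$ near $\theta = 0$, which against $b(\cos\theta) \sim \theta^{-2-2s}$ is \emph{not} integrable; only the cancellation with the companion term $|v_*'|^p - |v_*|^p$ after symmetrization produces the $\sin^2(\theta/2)$ weight needed to bring the angular kernel into $L^1$ via \eqref{index-sing}. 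Once this estimate is in place, the remaining regularization and Gronwall arguments are routine.
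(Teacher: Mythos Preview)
Your overall strategy (Povzner plus Gronwall) matches the paper's, but the cancellation mechanism you invoke is wrong. Under $\sigma \mapsto -\sigma$ one has $v' \leftrightarrow v_*'$, so the symmetrized integrand $|v'|^p + |v_*'|^p - |v|^p - |v_*|^p$ is \emph{invariant}, not odd; nothing is killed by that parity. The genuine source of the $\sin^2(\theta/2)$ gain is the \emph{azimuthal} integration: with $\mathbf{k} = (v-v_*)/|v-v_*|$ and $\sigma = \mathbf{k}\cos\theta + (\mathbf{h}\cos\varphi + \mathbf{i}\sin\varphi)\sin\theta$, one has $|v'|^2 = Y(\theta) + Z(\theta)\cos\varphi$ where $Y(\theta) = |v|^2\cos^2\frac{\theta}{2} + |v_*|^2\sin^2\frac{\theta}{2}$ and $Z(\theta) = |v\times v_*|\sin\theta$; the term linear in $Z$ vanishes after $\int_0^{2\pi} d\varphi$, while $Y(\theta) - |v|^2$ already carries $\sin^2\frac{\theta}{2}$. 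The paper carries this out explicitly (two integrations by parts in $\varphi$ isolate a $Z^2\Psi''$ remainder), then splits $K = -H + G$ with $-H \le 0$ by convexity of $\Psi(x) = (1+x)^{n+\a/2}$ and $G$ controlled by cross-moments---that is where your Povzner bound actually comes from.

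There is also a missing passage to the limit. You apply the weak formulation directly to the non-cutoff solution $F_t$, but this solution is constructed on the Fourier side as a limit of cutoff approximations $f^m$ (with $b_m = \min(b,m)$), and it is not immediate that it satisfies the weak form against growing test functions with the singular $b$. The paper instead proves the moment bound for each $f^m$ with constants independent of $m$, and then passes to the limit using $f^m \to f$ in $\cS'(\RR^3_v)$ tested against $\la v\ra^{2n+\a}\chi(v/R) \in \cS$, followed by $R \to \infty$. Your truncation $\psi_R$ plays the role of $\chi(v/R)$, but the $m \to \infty$ step is absent from your outline.
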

\begin{proof}
We first consider the case with angular
cutoff cross section 
and use a modified weighted moment as follows.
For $\delta >0$, put
\[W_\delta(v) = \frac{\la v \ra^{2n+\a}}{1+\delta \la v\ra^{2n+\a}}\,.
		\]
Since the function $x/(1+\delta x)$ is increasing in $[1,\infty]$ and $|v'| \le |v| +|v_*|$,  we have
\begin{align*}
W_{\delta}(v') &\lesssim \frac{\la v\ra^{2n+\a} + \la v_*\ra^{2n+\a}}{1+\delta (\la v\ra^{2n+\a} + \la v_*\ra^{2n+\a})}\\
&
= \frac{\la v\ra^{2n+\a} }{1+\delta (\la v\ra^{2n+\a} + \la v_*\ra^{2n+\a})}
+\frac{ \la v_*\ra^{2n+\a}}{1+\delta (\la v\ra^{2n+\a} + \la v_*\ra^{2n+\a})}\\
&\le \frac{\la v\ra^{2n+\a} }{1+\delta \la v\ra^{2n+\a} }
+\frac{ \la v_*\ra^{2n+\a}}{1+\delta \la v_*\ra^{2n+\a}} = W_{\delta}(v)+W_{\delta}(v_*)\,.
\end{align*}
Therefore,
$|W_{\delta}(v') - W_{\delta}(v)| \lesssim W_{\delta}(v)+W_{\delta}(v_*) $. Set
 $b_m(\cos \theta) = \min \{b(\cos \theta), m \} $ and let $f^m(t,v)$ be the
unique solution of the corresponding Cauchy problem. For the simplicity of the notations, we consider the case
where
$F_0 $ and $F^m_t$ have density functions $f_0(v)$ and $ f^m(t,v)$ respectively.
 The general case can
be considered similarly. Then
we have
\begin{align*}
\frac{d }{dt} \int f^m(t,v)W_{\delta}(v) dv \lesssim
\Big(\int b_m d\sigma \Big) \Big(\int f^m(t,v)W_{\delta}(v) dv \Big) \Big(\int f^m(t,v_*)dv_* \Big)\,,
\end{align*}
which yields
\[
\int f^m(t,v)W_{\delta}(v) dv \le C_m e^{C_m t} \int f_0(v)W_{\delta}(v) dv.
\]
Taking the limit $\delta \rightarrow +0$, we have $f^m(t,v) \in L^1_{2n+\a}$.\\
		
In order to overcome the angular singularity, we need a  precise formula used in the Povzner inequality, cf. \cite{mischler-wennberg}. 
To be self-contained, we derive it
as follows.  Since $\sigma \in \SS^2$, it can be  written as
\begin{align*}
\sigma 
=\mathbf{k}\cos\theta+\sin\theta(\mathbf{h}\cos\varphi+ \mathbf{i}\sin\varphi),
\enskip \theta \in [0,\pi), \varphi \in [-\pi,\pi)\,,
\end{align*}
by an orthogonal basis in $\RR^3$, 
\begin{align*}
\mathbf{k}=\frac{v-v_*}{|v-v_*|},\ 
\mathbf{i}=\frac{v\times v_*}{|v\times v_*|},\ 
\mathbf{h}=\mathbf{i} \times \mathbf{k}= \frac{
\big((v-v_*)\cdot v\big) v_* -\big((v-v_*)\cdot v_*\big) v}{|v-v_*| |v \times v_*|}.
\end{align*}
It follows from $(v+v_*)\perp \mathbf{i}$ and the definition of $\mathbf{h}$ that
\begin{align*}
|v'|^2=&|\frac{v+v_*}{2}|^2+|\frac{v-v_*}{2}|^2+ \frac{|v-v_*|}{2}(v+v_*)\cdot \sigma\\
\quad =&\frac{1}{4}(2|v|^2+2|v_*|^2)+\frac{|v-v_*|}{2}\Big((v+v_*)\cdot
(\cos\theta \mathbf{k}+\sin\theta\cos\varphi \mathbf{h}) \Big) \\
\quad =&\frac{1}{2} (|v|^2+|v_*|^2)+\frac{\cos\theta}{2}(|v|^2-|v_*|^2)\\
&\ +\frac{\sin\theta\cos\varphi}{2|v \times v_*|}\Big \{ (v+v_*)
\cdot \Big(\big((v-v_*)\cdot v\big) v_* -\big((v-v_*)\cdot v_*\big) v \Big)\Big\} \\
=&\frac{|v|^2(1+\cos\theta) }{2}+\frac{|v_*|^2(1-\cos\theta)}{2}+|v||v_*|\sin\alpha\sin\theta\cos\varphi \,,
\end{align*}
where $\alpha$ is the angle between $v$ and $v_*$.  Therefore, we have
\begin{align}\label{strong-case}
|v'|^2 &= |v|^2 \cos^2 \frac{\theta}{2} +|v_*|^2 \sin^2 \frac{\theta}{2} +|v \times v_*|\sin\theta\cos\varphi\\
&= Y(\theta) + Z(\theta) \cos \varphi\,.\notag
\end{align}
Similarly, we have
\begin{align}\label{strong-case-2}
|v'_*|^2 &= |v_*|^2 \cos^2 \frac{\theta}{2} +|v|^2 \sin^2 \frac{\theta}{2} -|v \times v_*|\sin\theta\cos\varphi\\
&= Y(\pi-\theta) - Z(\theta) \cos \varphi\,.\notag
\end{align}
If $\Psi(x) = \Psi_{2n+\alpha}(x) = (1+x)^{n+\alpha/2}$, then it follows from the change of variables that
\begin{align*}
&\frac{d }{dt} \int f^m(t,v)\la v \ra^{2n+\a}  dv =  \frac{1}{2}\iint  f^m(t,v) f^m(t,v_*) K(v,v_*)
dvdv_*,
\end{align*}
where
\begin{align*}
 K(v,v_*)&= \int_{\SS^2} b_m  
\big\{\Psi(|v'|^2) + \Psi(|v'_*|^2) -\Psi(|v|^2) -\Psi(|v_*|^2)\big\}d\sigma\\
&=2 \int_0^\pi \int_0^\pi b_m(\cos \theta) \big\{\Psi(|v'|^2) + \Psi(|v'_*|^2) -\Psi(|v|^2) -\Psi(|v_*|^2)\big\}\sin \theta
d\theta d \varphi\,.
\end{align*}
Note that
\begin{align*}
&\int^{\pi}_0 \Psi (Y(\theta )+ Z(\theta )\cos\varphi ) \ d\varphi\\
&=(\int^{\frac{\pi}{2}}_0 + \int^{\pi}_{\frac{\pi}{2}}) \ \Psi (Y(\theta ) + Z(\theta ) \cos\varphi ) \ d\varphi\\
&=\int^{\frac{\pi}{2}}_0 \{ \Psi (Y(\theta )+ Z(\theta )\cos\varphi )+\Psi (Y(\theta )- Z(\theta )\cos\varphi )-2\Psi (Y(\theta )) \} \ d\varphi +\pi \Psi (Y(\theta )),
\end{align*}
and by using integration by parts twice, we have
\begin{align*}
&\int_0^\pi \Psi(|v'|^2) d \varphi = \int^{\pi}_0 \Psi (Y(\theta )+ Z(\theta )\cos\varphi ) \ d\varphi \\
&=\pi \Psi (Y) +[\varphi \{ \Psi (Y+ Z\cos\varphi )+\Psi ( Y- Z \cos\varphi )-2\Psi (Y) \} ]^{\frac{\pi}{2}}_0\\
&\quad -\int^{\frac{\pi}{2}}_0 \varphi \{ {\Psi}'(Y+Z \cos\varphi ) - {\Psi}'(Y- Z\cos\varphi ) \} (- Z\sin\varphi )\ d\varphi \\
&=\pi \Psi (Y) + \int^{\frac{\pi}{2}}_0  Z \varphi \sin\varphi  ({\Psi}'(Y + Z\cos\varphi )-{\Psi}'(Y- Z \cos\varphi ))\ d\varphi \\
&=\pi \Psi (Y) +Z[(\sin\varphi - \varphi \cos\varphi ) \{ {\Psi}'(Y+ Z \cos\varphi ) - {\Psi}'(Y-Z \cos\varphi )\} ]^{\frac{\pi}{2}}_0\\
&\quad + Z^2\int^{\frac{\pi}{2}}_0 (\sin\varphi - \varphi \cos\varphi ) \{ {\Psi}''(Y+ Z \cos\varphi ) +{\Psi}''(Y- Z \cos\varphi ) \} \sin\varphi \ d\varphi \\
&=\pi \Psi (Y(\theta))   +Z^2\int^{\frac{\pi}{2}}_0 (\sin\varphi - \varphi \cos\varphi )\sin\varphi \\
& \qquad \times  \{ ({\Psi}''(Y(\theta)+ Z\cos\varphi ) +{\Psi}''(Y(\theta)-Z \cos\varphi ) \} \ d\varphi .
\end{align*}
Similarly, we obtain
\begin{align*}
&\int_0^\pi \Psi(|v'_*|^2) d \varphi 
=\pi \Psi (Y(\pi-\theta))   +Z^2\int^{\frac{\pi}{2}}_0 (\sin\varphi - \varphi \cos\varphi )\sin\varphi \\
& \qquad \qquad \times  \{ ({\Psi}''(Y(\pi-\theta)+ Z\cos\varphi ) +{\Psi}''(Y(\pi-\theta)-Z \cos\varphi ) \} \ d\varphi .
\end{align*}
In view of these formula, we consider $K(v,v_*)$ by
dividing it into two parts as follows:
\[
K(v,v_*) = -H(v,v_*) + G(v,v_*).
\]
For the first part, we have
\begin{align*}
&-H(v,v_*)=
2\pi \int_0^\pi b_m(\cos \theta) \\
&\qquad \times \big\{\Psi(Y(\theta) )+ \Psi(Y(\pi -\theta)) - (
 \cos^2\frac{\theta}{2} + \sin^2 \frac{\theta}{2} )
 \big(\Psi(|v|^2) +\Psi(|v_*|^2)\big)\big\} d \theta\\
& = 2\pi \int_0^\pi b_m(\cos \theta) \Big [\{ \Psi(|v|^2 \cos^2 \frac{\theta}{2} +|v_*|^2 \sin^2 \frac{\theta}{2}) - 
\cos^2 \frac{\theta}{2} \Psi(|v|^2) - \sin^2 \frac{\theta}{2}\Psi(|v_*|^2 )\} \\
&\qquad + \{ \Psi(|v_*|^2 \cos^2 \frac{\theta}{2} +|v|^2 \sin^2 \frac{\theta}{2}) - \cos^2 \frac{\theta}{2} \Psi(|v_*|^2 )- 
 \sin^2 \frac{\theta}{2} \Psi(|v|^2) \} \Big ]d\theta \le 0,
\end{align*}
where we have used the fact that $\Psi$ is concave.   On the other hand, if $Z_0 = Z(\theta)/(1+Y(\theta)) \in [0,1]$, then
\begin{align*}
&Z^2\int^{\frac{\pi}{2}}_0 (\sin\varphi - \varphi \cos\varphi )\sin \varphi \{ {\Psi}''(Y+ Z \cos\varphi ) +{\Psi}''(Y- Z \cos\varphi ) \} \ d\varphi \\
&\lesssim 
Z^2(1+ Y)^{n-2+\a/2} \int_0^{\pi/2} \varphi^3 \{(1 + Z_0 \cos \varphi)^{n-2+\a/2} +  (1 - Z_0 \cos \varphi)^{n-2+\a/2}\}d\varphi\\
&\left\{
\begin{array}{ll}
\lesssim Z^2 \lesssim |v|^2|v_*|^2 \theta^{2}, \enskip &\mbox{if}\enskip n =1;\\\\
\lesssim Z^2(1+ Y)^{n-2+\a/2} \lesssim (1+ |v|^2 + |v_*|^2)^{n+\a/2} \theta^2,\enskip &\mbox{if} \enskip n \ge 2\,.
\end{array}
\right .
\end{align*}
Therefore,  if $n =1$ then $G(v,v_*) \lesssim |v|^2 |v_*|^2$, which implies
\[
\frac{d }{dt} \int f^m(t,v)\la v \ra^{2n+\a}  dv  \le C_0\Big( \int |v|^2 f^m(t,v) dv\Big)^2 = C_0\Big(\int |v|^2 f_0(v) dv\Big)^2,
\]
where the  constant $C_0$ is independent of $m$.
When $n \ge 2$,  there exists another constant  $C_1 >0$ independent of $m$ such that
\begin{align*}
G(v,v_*) \le C_1\big (\la v \ra^{2n+\alpha} + \la v_* \ra^{2n+\alpha}\big )\,,
\end{align*}
which implies
\begin{align*}
\frac{d }{dt} \int f^m(t,v)\la v \ra^{2n+\a}  dv 
 \le 4 C_1  \int f^m(t,v) \la v \ra^{2n+\a} dv\,.
\end{align*}

Finally, take a cutoff function $\chi(v)\in C_0^\infty(\bR^3)$ satisfying  $0\le\chi(v)\le1$ and $\chi=1$ on $\{|v|\le1\}$. Then, for any $R>0$, we have
$$\int f^m(t,v)\la v \ra^{2n+\a} \chi\left(\frac{v}{R}\right)   dv \le C e^{Ct} \int f_0(v)\la v \ra^{2n+\a}  dv. $$
Since $f^m(t,v)\rightarrow f(t,v)$ in $\cS'(\RR^3_v)$ and $\la v \ra^{2n+\a} \chi\left(\frac{v}{R}\right)  \in \cS$, we get
$$\int f(t,v)\la v \ra^{2n+\a} \chi\left(\frac{v}{R}\right)     dv \le C e^{Ct} \int f_0(v)\la v \ra^{2n+\a}  dv. $$
Letting $R \rightarrow  \infty$ shows for any $T>0$, $F_t\in L^\infty([0,T],P_{2n+\a})$.
And this completes the proof of Corollary  \ref{propagate-coro}.
\end{proof}
	
The proof of the  Theorem 1.5 will be given in the Fourier space. In fact,
by letting $\varphi(t,\xi) =\cF(F_t)$ and $\varphi_0= \cF(F_0)$,
it follows from the Bobylev formula that
the Cauchy problem \eqref{bol}-\eqref{initial} is reduced to
\begin{equation}\label{c-p-fourier}
\left \{
\begin{array}{l}\dis \partial_t \varphi(t,\xi)
=\int_{\SS^2}b\left(\frac{\xi \cdot \sigma}{|\xi|}\right) \Big( \varphi(t,\xi^+)\varphi(t, \xi^-) - \varphi(t, \xi)
\varphi(t,0)\Big) d\sigma, \\\\
\dis \varphi(0,\xi)=\varphi_0(\xi), \enskip \mbox{where} \enskip
\dis \xi^\pm = \frac{\xi}{2} \pm \frac{|\xi|}{2} \sigma\,.
\end{array}
\right.
\end{equation}

By  Theorem \ref{complete-1}, to prove Theorem \ref{existence-base-space}
it suffices to show
\begin{theo}{\label{fourier-space}}
Assume that $b$ satisfies \eqref{index-sing} for some $\a_0 \in (0,2)$. Let    $2>\a>\b>\max\{\a_0,\a/2\}$ and $\e\in(0,1-\frac{\a_0}\b]$.
If the initial datum $\varphi_0$ belongs to $\wt\cM^\a$, then there exists a unique classical solution $\varphi(t,\xi) \in C([0,\infty), \wt\cM^\a)$
to the Cauchy problem \eqref{c-p-fourier} satisfying that, for all $t,s\in[0,T]$,
\begin{align}\label{fourier-continuity-1}
&||\vp(t)-\vp(s)||_\b\lesssim e^{\lam_\b\max\{s,t\}}|t-s|,\\
&||\vp(t)-\vp(s)||_{\wt\cM^\a}\lesssim C(t,s)|t-s|,\label{fourier-continuity-2}
\end{align}
where $C(t,s)=e^{\lam_\a \max\{s,t\}}||\vp_0-1||_{\wt\cM^\a}+(e^{\lam_\b \max\{s,t\}}||1-\vp_0||_\b
+1)^2$, and
\begin{equation}\label{05}
  \lam_i= 2 \pi \int_0^{\pi/2} b\left( \cos \theta \right)
             \left(\cos^i\frac\theta2+\sin^i\frac\theta2-1\right) \sin \theta d \theta >0 \,,i=\a,\b. 
\end{equation}
Furthermore, if $\vp(t,\xi),\tilde \vp(t,\xi)\in C([0,\infty),\wt\cM^\a)$ are two solutions to the Cauchy problem \eqref{c-p-fourier}
with initial data $\vp_0,\tilde \vp_0\in\wt\cM^\a$, respectively, then for any $t>0$, the following two stability estimates hold
\begin{equation}
||\vp(t)-\tilde \vp(t)||_{\a} \leq e^{\lam_\a t}||\vp_0-\tilde \vp_0||_{\a}\,, \label{alpha-stability}
\end{equation}
\begin{equation}\label{fourier-stability}
  ||\vp(t)-\tilde \vp(t)||_{\wt\cM^\a}
 \lesssim e^{\lam_\a t}||\vp_0-\tilde \vp_0||_{\wt\cM^\a}+\frac{e^{2\lam_\b t}-e^{\lam_\a t}}{2\lam_\b-\lam_\a}A+\frac{e^{\lam_\b t}-e^{\lam_\a t}}{\lam_\b-\lam_\a}B\,,
\end{equation}
where 
\begin{align*}
&A=\max\{||1-\vp_0||_\b,||1-\tilde\vp_0||_\b\}\cdot ||\vp_0-\tilde\vp_0||_\b\,,\\
&B=||\vp_0-\tilde\vp_0||_\b+||1-\tilde\vp_0||_\b^{1-\ve}||\vp_0-\tilde\vp_0||_\b^\ve.
\end{align*}
\end{theo}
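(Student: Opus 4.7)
The approach is the standard angular-cutoff procedure: introduce $b_n(\cos\theta)=\min\{b(\cos\theta),n\}$ and consider \eqref{c-p-fourier} with $b$ replaced by $b_n$. With the cut-off kernel, the right-hand side is a Lipschitz map on the complete metric space $(\wt\cM^\a,dis_{\a,\b,\e})$ furnished by Theorem~\ref{complete-1}, so Picard--Lindel\"of iteration yields a unique global solution $\vp_n\in C([0,\infty);\wt\cM^\a)$. The proof then reduces to deriving uniform (in $n$) a priori bounds on $\vp_n$ in $\|\cdot\|_\b$ and $\|\cdot\|_{\wt\cM^\a}$, establishing the continuity and stability estimates at the truncated level, and passing to the limit using completeness.

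The $\b$-bound $\|\vp_n(t)-1\|_\b\lesssim e^{\lambda_\b t}\|\vp_0-1\|_\b$ is the classical Cannone--Karch argument: writing $\psi=\vp_n-1$, dividing the equation by $|\xi|^\b$, using $|\xi^\pm|=|\xi|\cos(\theta/2)$ respectively $|\xi|\sin(\theta/2)$, and controlling the bilinear contribution by $|\psi^+\psi^-|\le\|\psi\|_\b^2|\xi|^{2\b}\cos^\b(\theta/2)\sin^\b(\theta/2)$ (whose angular integral converges by \eqref{index-sing} since $\b>\a_0$). The main obstacle is the $\wt\cM^\a$-estimate. One takes the real part of the equation and uses $\mathrm{Re}(\psi^+\psi^-)=\mathrm{Re}\psi^+\mathrm{Re}\psi^--\mathrm{Im}\psi^+\mathrm{Im}\psi^-$; dividing by $|\xi|^{3+\a}$ and integrating in $\xi$, the three linear terms produce $\lambda_\a\|\mathrm{Re}\psi\|_{\cM^\a}$ via Fubini and the changes of variable $\xi\mapsto\xi^\pm$, whose Jacobians $\cos^{-3}(\theta/2)$ and $\sin^{-3}(\theta/2)$ combine with $|\xi|^{-3-\a}$ as in \cite{MWY-2014}. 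The bilinear term is split at $|\xi|=1$: on $\{|\xi|\ge 1\}$ the trivial $|\psi|\le 2$ together with $|\xi|^{-3-\a}\in L^1(|\xi|\ge 1)$ suffices, while on $\{|\xi|\le 1\}$ the estimate $|\mathrm{Re}(\psi^+\psi^-)|\le\|\psi\|_\b^2|\xi|^{2\b}\cos^\b(\theta/2)\sin^\b(\theta/2)$ is integrable in $\xi$ precisely because $2\b>\a$ (making $|\xi|^{2\b-3-\a}$ integrable at the origin) and integrable in $\sigma$ because $\b>\a_0$; both requirements are exactly what $\b>\max\{\a_0,\a/2\}$ encodes. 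Gronwall then yields the uniform bound $\|\vp_n(t)-1\|_{\wt\cM^\a}\lesssim e^{\lambda_\a t}\|\vp_0-1\|_{\wt\cM^\a}+(e^{\lambda_\b t}\|\vp_0-1\|_\b+1)^2$.

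The continuity estimates \eqref{fourier-continuity-1}--\eqref{fourier-continuity-2} follow by integrating the equation between $s$ and $t$ and inserting the above a priori bounds. The $\a$-stability \eqref{alpha-stability} is the same Cannone--Karch argument applied to $W=\vp-\tilde\vp$ in $\|\cdot\|_\a$, using $|\vp|,|\tilde\vp|\le 1$ to turn the bilinear difference into a linear contribution in $W$. For \eqref{fourier-stability} one writes $\vp^+\vp^--\tilde\vp^+\tilde\vp^-=(\vp^+-\tilde\vp^+)\vp^-+\tilde\vp^+(\vp^--\tilde\vp^-)$ and repeats the $\wt\cM^\a$-estimate; the a priori $\b$-bounds on $\vp,\tilde\vp$ convert the bilinear difference into a forcing of the form $c_1 A e^{2\lambda_\b t}+c_2 B e^{\lambda_\b t}$. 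The H\"older exponent $\e$ in the definition of $B$ arises from the interpolation needed to control the imaginary-part contribution to the bilinear forcing (interpolating between the pointwise bounds $|\mathrm{Im}W|\le\|W\|_\b|\xi|^\b$ and $|\mathrm{Im}W|\le 2$), and the constraint $\e\le 1-\a_0/\b$ is precisely what keeps the resulting angular integral convergent. Integrating the linear ODE $y'\le\lambda_\a y+c_1 A e^{2\lambda_\b t}+c_2 B e^{\lambda_\b t}$ with $y(0)=\|W_0\|_{\wt\cM^\a}$ yields \eqref{fourier-stability}. Uniqueness is immediate from this estimate, and applying it to differences $\vp_n-\vp_m$ of truncated solutions shows $\{\vp_n\}$ is Cauchy in $dis_{\a,\b,\e}$; the limit in $\wt\cM^\a$ is the required classical solution of \eqref{c-p-fourier}.
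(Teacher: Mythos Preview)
Your overall architecture matches the paper's: angular cutoff $b_n$, fixed-point construction in the cutoff case, uniform a priori bounds, stability estimates, then passage to the limit. There are, however, two concrete gaps.

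\textbf{The bilinear term on $\{|\xi|\ge 1\}$.} You write that the trivial bound $|\psi|\le 2$ together with $|\xi|^{-3-\a}\in L^1(|\xi|\ge 1)$ suffices. It does not: bounding $|\mathrm{Im}\,\psi^+\,\mathrm{Im}\,\psi^-|\le 4$ leaves the angular integral $\int_{\SS^2} b_n(\cdot)\,d\sigma$, which diverges as $n\to\infty$, so the resulting $\wt\cM^\a$-bound is not uniform in $n$. The paper (Lemma~2.1, estimate of $I_{2,2}$) instead keeps one factor controlled by $\|\psi\|_\b|\xi^-|^\b=\|\psi\|_\b|\xi|^\b\sin^\b(\theta/2)$, which produces the crucial $\sin^\b(\theta/2)$ needed to make $\int b(\cdot)\sin^\b(\theta/2)\,d\sigma$ convergent by \eqref{index-sing} and $\b>\a_0$. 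For the stability estimate the paper handles the asymmetric term $|\mathrm{Im}\,\tilde\vp^-||\mathrm{Im}(\vp^+-\tilde\vp^+)|$ by the $\e$--H\"older splitting you alluded to later; that splitting is not optional, and your treatment of the a priori bound must already retain a $\sin^\b(\theta/2)$ factor.

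\textbf{Passage to the limit.} You propose to show $\{\vp_n\}$ is Cauchy in $dis_{\a,\b,\e}$ by applying the stability estimate to $\vp_n-\vp_m$. But \eqref{fourier-stability} compares two solutions of the \emph{same} equation with different initial data; here $\vp_n,\vp_m$ have the \emph{same} initial datum $\vp_0$ and solve \emph{different} equations ($b_n$ versus $b_m$). Plugging into \eqref{fourier-stability} as written yields $0$ on the right-hand side, which is clearly wrong. A correct Cauchy argument would require tracking the extra forcing generated by $(b_n-b_m)$, which you have not done. The paper avoids this entirely: it uses equicontinuity and Ascoli--Arzel\`a to extract a limit $\vp$ uniformly on compacts of $[0,\infty)\times\RR^3$, and then upgrades to $\vp(t)\in\wt\cM^\a$ by Fatou-type passage on truncated regions $\{\delta<|\xi|<\delta^{-1}\}$ using the uniform cutoff stability \eqref{cutoff-stability}.
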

\begin{rema}\label{rem-1234}
Since $\vp_0,\tilde \vp_0\in\wt\cM^\a \subset \cK^\a$, the stability estimate \eqref{alpha-stability} is nothing but (13) of \cite{morimoto-12}.
\end{rema}

Finally, we give the following corollary about the regularity of the
solutions.

\begin{coro}\label{smoothing}
Let $b(\cos \theta)$ satisfy \eqref{index-sing}
and let $\alpha \in (\alpha_0, 2]$.
If $F_0 \in  P_\alpha(\RR^3)$ is not a single Dirac mass and  $f(t,v)$ is the unique solution 
in $C([0,\infty),  P_\alpha(\RR^3))$ to the Cauchy problem \eqref{bol}-\eqref{initial}, then
$f(t, \cdot )$ belongs to $L^1_\a (\RR^3) \cap H^\infty(\RR^3) $ for any $t >0$.
\end{coro}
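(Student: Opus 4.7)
The plan is to work on the Fourier side via the Bobylev reduction \eqref{c-p-fourier}. The $L^1_\alpha$ assertion is immediate: since $F_t \in P_\alpha(\RR^3)$ for all $t \ge 0$, the $\alpha$-moment $\int \la v\ra^\alpha dF_t(v)$ is finite. For the $H^\infty$ statement, it suffices to establish that the characteristic function $\vp(t,\xi) = \cF(F_t)(\xi)$ decays faster than any polynomial in $|\xi|$ for each $t>0$; in fact, the natural target in this setting is the Gevrey-type bound $|\vp(t,\xi)| \le C(t)\, e^{-c(t)\, |\xi|^{2s}}$ for $t>0$, which gives $\vp(t,\cdot) \in \cS(\RR^3)$ and hence $f(t,\cdot) \in H^\infty(\RR^3)$.

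By Theorem \ref{fourier-space} together with the inclusion $\wt\cM^\a \subset \cK^\a$ in \eqref{trivial-inclusion} for $\a \in (\a_0,2)$, and the identity $\cF(P_2) = \cK^2$ for $\a = 2$, the solution constructed on the Fourier side coincides with the classical one in $C([0,\infty),\cK^\a)$. The hypothesis that $F_0$ is not a single Dirac mass translates to the statement that $\vp_0$ is not of the form $e^{-iv_0 \cdot \xi}$ for any $v_0 \in \RR^3$; equivalently, $|\vp_0| \not\equiv 1$, so by continuity there exist $r_0, \eta_0 > 0$ with $|\vp_0(\xi)|^2 \le 1 - \eta_0$ on a ball in $\RR^3$. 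This initial non-degeneracy is what triggers the smoothing.

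The mechanism is the standard one for Maxwellian molecules without angular cutoff, as developed by Morimoto and coauthors (see \cite{morimoto-12,MWY-2014} and references therein). Differentiating $|\vp(t,\xi)|^2$ using \eqref{c-p-fourier} and exploiting the singularity $b(\cos\theta)\sin\theta \gtrsim \theta^{-1-2s}$ near $\theta=0$ produces a dissipative inequality of the schematic form
\[
\partial_t |\vp(t,\xi)|^2 + c_* |\xi|^{2s}\bigl(1-|\vp(t,\xi)|^2\bigr) \lesssim \mbox{lower-frequency error},
\]
where the coercive term is genuine wherever $|\vp|$ is bounded away from $1$. A bootstrap argument then propagates the initial ball of non-degeneracy to all frequencies for any $t > 0$, and an ODE-comparison on the resulting inequality yields the Gevrey decay stated above.

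The main obstacle is the instantaneous spread of non-degeneracy from the initial ball to all of $\RR^3$; without this, the coercive term vanishes and no smoothing would follow. This is precisely the content of the $\cK^\a$-level smoothing arguments in the papers cited; since our solution lies in $\cK^\a$ and agrees with theirs by the uniqueness in Theorem \ref{fourier-space} (noting Remark \ref{rem-1234}), those arguments apply without modification. The only verification is that the non-Dirac-mass hypothesis on $F_0 \in P_\a(\RR^3)$ supplies exactly the non-degeneracy input required, which it does.
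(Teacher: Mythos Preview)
Your proposal is correct and matches the paper's approach: the paper offers no separate proof of this corollary, only the remark immediately following it, which records that the case $\alpha \neq 1$ is Theorem 1.8 of \cite{MWY-2014} and that the new identification $\wt\cM^{1} = \cF(P_1)$ established here fills the remaining gap at $\alpha = 1$. One minor correction to your ordering: membership in $P_\alpha$ alone does not yield $f(t,\cdot) \in L^1_\alpha$, since $P_\alpha$ is a space of measures; the absolute continuity of $F_t$ comes from the $H^\infty$ conclusion, after which the $\alpha$-moment bound from $F_t \in P_\alpha$ gives $f(t,\cdot) \in L^1_\alpha$.
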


\begin{rema} The case except $\a \ne 1$ in the above corollary
was already proved in Theorem 1.8 in \cite{MWY-2014}.
 The  newly defined  space $\wt \cM_\a$ in this paper 
fills the gap in the case when  $\a =1$. 
\end{rema}

This ends the introduction and the proofs of Theorems \ref{existence-base-space}
and \ref{fourier-space} will be given in the next section.

\section{Proof of  Theorem  \ref{fourier-space}}\label{s-34}

This section concerns with the existence of measure
valued solutions in the new classification of the characteristic functions.
We only need to prove  Theorem  \ref{fourier-space} because Theorem \ref{existence-base-space} will then follow by using Theorem \ref{complete-1}.

Let $b(\cdot)$ satisfy \eqref{index-sing} . As usual, the
existence for non-cutoff cross section is based on the cutoff approximations $b_n(\cdot)=\min\{b(\cdot),n\}$.
Following the previous works \cite{Cannone-Karch-1, morimoto-12, MY,MWY-2014}, 
define the following constants for $\a\in[\a_0,2)$:
\begin{align}
\g_\a^n&=\int_{\bS^2}b_n\left(\frac{\xi\cdot\s}{|\xi|}\right)\left(\sin^\a\frac\theta2+\cos^\a\frac\theta2\right)d\s>0,\nonumber\\
\lam_\a^n&=\int_{\bS^2}b_n\left(\frac{\xi\cdot\s}{|\xi|}\right)\left(\sin^\a\frac\theta2+\cos^\a\frac\theta2-1\right)d\s
=\g_\a^n-\g_2^n>0,\nonumber\\
\lam_\a&=\int_{\bS^2}b\left(\frac{\xi\cdot\s}{|\xi|}\right)\left(\sin^\a\frac\theta2+\cos^\a\frac\theta2-1\right)d\s>0.\label{lam-def}
\end{align}
Note that $\lam_\a$ is finite and independent of $\xi$; furthermore, $\{\lam_\a^n\}_{n=1}^\infty$ converges monotonically to $\lam_\a$.


Following Subsection 4.2 of \cite{Cannone-Karch-1},  consider
 the nonlinear operator,
$$
\cG_n(\vp)(\xi)\equiv\int_{\bS^2} b_n \Big (\frac{\xi\cdot\sigma}{|\xi|}\Big )\vp(\xi^+)\vp(\xi^-)d\sigma.
$$
Then,  problem \eqref{c-p-fourier} can be formulated by
\begin{align}\label{integral-eq}
\vp(\xi,t)=\vp_0(\xi)e^{-\g_2^nt}+\int_0^t e^{-\g_2^n(t-\tau)}\cG_n(\vp(\cdot,\tau))(\xi)d\tau.
\end{align}

For the nonlinear operator $\cG_n(\cdot)$, we have the following estimate.

\begin{lemm}\label{lemm01}
Assume $b(\cdot)$ satisfies \eqref{index-sing}. Let $b_n=\min\{b,n\}$. Let $\max\{\a_0,\a/2\}<\b<\a<2$ and $\e\in(0,1-\frac{\a_0}\b]$. Then there exists $C>0$ 
 independent of $n$  such that, for all $\vp,\tilde \vp\in\wt\cM^\a$, we have
\begin{align}\label{g-operator-1}
||\cG_n(\vp)-\cG_n(\tilde\vp)||_\b\le \g_\b^n||\vp-\tilde\vp||_\b,
\end{align}
\begin{align}
||\cG_n(\vp)-\cG_n(\tilde\vp)||_{\wt\cM_\a}
\leq&\g_\a^n||\vp-\tilde\vp||_{\wt\cM_\a}+C\max\{||1-\vp||_\b,||1-\tilde\vp||_\b\}\cdot ||\vp-\tilde\vp||_\b \nonumber\\
&+C||\vp-\tilde\vp||_\b+C||1-\tilde\vp||_\b^{1-\ve}||\vp-\tilde\vp||_\b^\ve,\label{g-operator-2}
\end{align}
for any $n\ge1(n\in\bN)$.
In particular, if $\tilde\vp\equiv1$,
\[
\begin{split}
\int_{\bR^3}\frac{|Re\cG_n(\vp)-\g_2^n|}{|\xi|^{3+\a}}d\xi
\le\g_\a^n||\vp-1||_{\wt\cM_\a}+C||\vp-1||_\b^2
+C||\vp-1||_\b.
\end{split}
\]
\end{lemm}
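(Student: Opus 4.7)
Estimate \eqref{g-operator-1} is the standard Lipschitz bound. Using the crossed identity
\[
\vp(\xi^+)\vp(\xi^-) - \tilde\vp(\xi^+)\tilde\vp(\xi^-) = \vp(\xi^+)[\vp(\xi^-) - \tilde\vp(\xi^-)] + \tilde\vp(\xi^-)[\vp(\xi^+) - \tilde\vp(\xi^+)],
\]
together with $|\vp|, |\tilde\vp| \le 1$ and the identities $|\xi^+|^\b = |\xi|^\b \cos^\b(\theta/2)$, $|\xi^-|^\b = |\xi|^\b \sin^\b(\theta/2)$, one obtains pointwise $|\cG_n(\vp)(\xi) - \cG_n(\tilde\vp)(\xi)| \le \g_\b^n |\xi|^\b \|\vp - \tilde\vp\|_\b$, giving \eqref{g-operator-1}.

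For \eqref{g-operator-2}, write $u,v$ (resp.\ $\tilde u,\tilde v$) for the real and imaginary parts of $\vp$ (resp.\ $\tilde\vp$) and split
\[
Re\bigl[\vp(\xi^+)\vp(\xi^-) - \tilde\vp(\xi^+)\tilde\vp(\xi^-)\bigr] = \bigl[u(\xi^+)u(\xi^-) - \tilde u(\xi^+)\tilde u(\xi^-)\bigr] - \bigl[v(\xi^+)v(\xi^-) - \tilde v(\xi^+)\tilde v(\xi^-)\bigr].
\]
The $u$-part, by telescoping and $|u|,|\tilde u|\le 1$, reduces to integrating $b_n(|u-\tilde u|(\xi^+) + |u-\tilde u|(\xi^-))/|\xi|^{3+\a}$. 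Swapping the $\xi,\s$ integrals and, for each fixed $\sigma$, performing the substitution $\eta = \xi^{\pm}$ (Jacobians $\sin^2(\theta/2)/4$ and $\cos^2(\theta/2)/4$, combined with $|\xi| = |\eta|/\sin(\theta/2)$ or $|\eta|/\cos(\theta/2)$), the remaining $\sigma$-integral assembles to the exact coefficient $\g_\a^n = 2\pi\int_0^{\pi/2} b_n(\sin^\a + \cos^\a)(\theta/2)\sin\theta\,d\theta$ in front of $\|\vp-\tilde\vp\|_{\wt\cM^\a}$.

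The $v$-part is handled by the specific crossed expansion $v(\xi^+)[v(\xi^-) - \tilde v(\xi^-)] + \tilde v(\xi^-)[v(\xi^+) - \tilde v(\xi^+)]$, the bound $|Im\,\psi(\xi)| \le \min\{1,\|\psi-1\|_\b |\xi|^\b\}$ (valid for $\psi = \vp, \tilde\vp$ or $\vp - \tilde\vp$ with the appropriate $\b$-norm), and a split of $\bR^3$ at $|\xi|=1$. On $\{|\xi|\le 1\}$, applying the $\b$-bound to each factor produces $|\xi|^{2\b}$ (integrable since $2\b>\a$) and an angular weight $(\sin(\theta/2)\cos(\theta/2))^\b$ (integrable since $\b>\a_0$), yielding $C\max\{\|1-\vp\|_\b,\|1-\tilde\vp\|_\b\}\|\vp-\tilde\vp\|_\b$. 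On $\{|\xi|>1\}$, the term $v(\xi^+)[v-\tilde v](\xi^-)$ is controlled by $|v(\xi^+)|\le 1$ and $|v-\tilde v|(\xi^-)\le\|\vp-\tilde\vp\|_\b|\xi^-|^\b$ (using $\b<\a$, $\b>\a_0$) to give $C\|\vp-\tilde\vp\|_\b$. For $\tilde v(\xi^-)[v-\tilde v](\xi^+)$, interpolate
\[
|\tilde v(\xi^-)| \le \|1-\tilde\vp\|_\b^{1-\ve}|\xi^-|^{(1-\ve)\b}, \qquad |v-\tilde v|(\xi^+) \lesssim \|\vp-\tilde\vp\|_\b^\ve|\xi^+|^{\ve\b}
\]
(from $|\tilde v|^\ve\le 1$ and $|v-\tilde v|^{1-\ve}\le 2^{1-\ve}$); the resulting angular weight $\sin^{(1-\ve)\b}(\theta/2)\cos^{\ve\b}(\theta/2)$ is integrable against $b_n\sin\theta\,d\theta$ precisely when $(1-\ve)\b\ge\a_0$, i.e., $\ve\le 1-\a_0/\b$, giving the final term $C\|1-\tilde\vp\|_\b^{1-\ve}\|\vp-\tilde\vp\|_\b^\ve$. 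When $\tilde\vp\equiv 1$, $Re\,\cG_n(1)=\g_2^n$ and the vanishing of $\tilde v$ and $\|1-\tilde\vp\|_\b$ kills the crossed-imaginary and interpolated contributions, yielding the stated special case.

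\textbf{Main obstacle.} The crucial technical point is the choice of crossed pairing in the $v$-part, which places the interpolated exponent $(1-\ve)\b$ on $\sin(\theta/2)$ rather than on $\cos(\theta/2)$; the opposite pairing would force $\ve\ge\a_0/\b$, incompatible with $\ve\le 1-\a_0/\b$. Aligning this placement with the angular singularity threshold $\a_0$ of $b_n$, while simultaneously splitting at $|\xi|=1$ to balance the $|\xi|$-integrability coming from $2\b>\a$ versus $\b<\a$, is the main difficulty.
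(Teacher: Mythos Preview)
Your proposal is correct and follows essentially the same approach as the paper: the same real/imaginary splitting of $\vp(\xi^+)\vp(\xi^-)-\tilde\vp(\xi^+)\tilde\vp(\xi^-)$, the same change-of-variables producing the sharp coefficient $\g_\a^n$ on the real part (the paper simply cites Lemma~3.2 of \cite{MWY-2014} here, while you sketch the substitution $\eta=\xi^\pm$ directly), the same split at $|\xi|=1$ for the imaginary part, and the same crossed pairing that puts the $\sin^{(1-\ve)\b}(\theta/2)$ weight on the singular side. The only cosmetic difference is that the paper carries out the interpolation at the level of the integral via the identity $A=A^{1-\ve}A^\ve$ (bounding each copy of $A$ differently), whereas you interpolate pointwise via $|\tilde v|=|\tilde v|^{1-\ve}|\tilde v|^{\ve}$ and $|v-\tilde v|=|v-\tilde v|^{1-\ve}|v-\tilde v|^{\ve}$; both routes give the same angular weight and the same final bound.
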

\begin{proof}
	The proof of \eqref{g-operator-1} can be found in \cite{Cannone-Karch-1}. We only need to prove other estimates in the lemma. Firstly, note that
\[
\begin{split}
    ||\cG_n(\vp)-\cG_n(\tilde\vp)||_{\wt\cM_\a}&=
	\int_{\bR^3}\frac{|Re\cG_n(\vp)-Re\cG_n(\tilde\vp)|}{|\xi|^{3+\a}}d\xi \\
	& \leq \int_{\bR^3}\int_{\bS^2}b_n(\cdot)\left(\frac{
		|Re\vp^+Re\vp^--Re\tilde\vp^+R\tilde\vp^-|}{|\xi|^{3+\a}}\right.\\
	&\qquad\qquad\left.+
	\frac{|Im\vp^+Im\vp^--Im\tilde\vp^+Im\tilde\vp^-|}{|\xi|^{3+\a}}\right) d\s d\xi\\
	& =I_1+I_2.
\end{split}
\]
For $I_1$, we can apply the Lemma 3.2 in \cite{MWY-2014}.  That is,
$$
I_1=\int_{\bR^3}\int_{\bS^2}b_n(\cdot)\frac{
	|Re\vp^+Re\vp^--Re\tilde\vp^+Re\tilde\vp^-|}{|\xi|^{3+\a}} d\s d\xi\leq\g_\a^n\int_{\bR^3}\frac{|Re\vp-Re\tilde\vp|}{|\xi|^{3+\a}}d\xi.
$$
For $I_2$, we divide it into two parts: $I_{2,1}$ for $|\xi|<1$ and $I_{2,2}$
for $|\xi|>1$. Then, 
\[
\begin{split}
I_{2,1}
	&=\int_{|\xi|<1}\int_{\bS^2}b_n(\cdot)\frac{|Im\vp^+\cdot Im(\vp^--\tilde\vp^-)+Im\tilde\vp^-\cdot Im(\vp^+-\tilde\vp^+)|}{|\xi|^{3+\a}} d\s d\xi\\
	&\leq2\int_{|\xi|<1}\frac{d\xi}{|\xi|^{3+\a-2\b}}\int_{\bS^2} b(\cdot)\sin^\b\frac\theta2\cos^\b\frac\theta2d\s
	\cdot ||\vp-\tilde\vp||_\b\\
	&\qquad\cdot\max\{||1-\vp||_\b,||1-\tilde\vp||_\b\}\\
	&= C ||\vp-\tilde\vp||_\b\cdot\max\{||1-\vp||_\b,||1-\tilde\vp||_\b\}.
	\end{split}
	\]
By using 
	\[
	\begin{split}
	\int_{|\xi|>1}\frac{|Im\vp^+||Im(\vp^--\tilde\vp^-)|}{|\xi|^{3+\a}}d\xi
	&\leq\int_{|\xi|>1}\frac{\sin^\b\frac\theta2}{|\xi|^{3+\a-\b}}\frac{|Im(\vp^--\tilde\vp^-)|}{|\xi^-|^{\b}}d\xi\\
	&\leq C||\vp-\tilde\vp||_\b\cdot\sin^{\b}\frac\theta2,
	\end{split}
	\]
	and
	\[
	\begin{split}
	&\int_{|\xi|>1}\frac{|Im\tilde\vp^-||Im(\vp^+-\tilde\vp^+)|}{|\xi|^{3+\a}}d\xi\\
	&\leq\left[\int_{|\xi|>1}\frac{|Im\tilde\vp^-||Im(\vp^+-\tilde\vp^+)|}{|\xi|^{3+\a}}d\xi\right]^{1-\ve}
	\left[\int_{|\xi|>1}\frac{|Im\tilde\vp^-||Im(\vp^+-\tilde\vp^+)|}{|\xi|^{3+\a}}d\xi\right]^\ve\\
	&\leq\left[\int_{|\xi|>1}\frac{2\sin^\b\frac\theta2}{|\xi|^{3+\a-\b}}\cdot\frac{|Im\tilde\vp^-|}{|\xi^-|^\b}d\xi\right]^{1-\ve}
	\left[\int_{|\xi|>1}\frac{\cos^\b\frac\theta2}{|\xi|^{3+\a-\b}}\cdot\frac{|Im(\vp^+-\tilde\vp^+)|}{|\xi^+|^\b}d\xi\right]^\ve\\
	&\leq 2\sin^{\b(1-\ve)}\frac\theta2\cos^{\b\ve}\frac{\theta}{2}\int_{|\xi|>1}\frac1{|\xi|^{3+\a-\b}}d\xi ||1-\tilde\vp||_\b^{1-\ve}||\vp-\tilde{\vp}||_\b^\ve,
	\end{split}
	\]
	we obtain 
	\begin{align}
	I_{2,2}
	\leq &C\int b(\cdot)\sin^\b\frac\theta2d\s||\vp-\tilde\vp||_\b\\
	&+2C\int b(\cdot)\sin^{\b(1-\ve)}\frac\theta2\cos^{\b\ve}\frac\theta2d\s||1-\tilde\vp||_\b^{1-\ve}||\vp-\tilde\vp||_\b^\ve.\nonumber
	\end{align}
	Therefore,
	\[
	\begin{split}
	\int_{\bR^3}\frac{|Re\cG_n(\vp)-Re\cG_n(\tilde\vp)|}{|\xi|^{3+\a}}d\xi
	\leq&\g_\a^n\int_{\bR^3}\frac{|Re\vp-Re\tilde\vp|}{|\xi|^{3+\a}}d\xi\\
	&+C\max\{||1-\vp||_\b,||1-\tilde\vp||_\b\}\cdot ||\vp-\tilde\vp||_\b\\
	&+C||\vp-\tilde\vp||_\b+C||1-\tilde\vp||_\b^{1-\ve}||\vp-\tilde\vp||_\b^\ve.\\
	\end{split}
	\]
And this completes the proof of the lemma.
\end{proof}

\subsection{Existence under the cutoff assumption}
We are now ready to prove the existence of the solution under the cutoff assumption.
The solution to \eqref{c-p-fourier} with $b$ replaced by $b_n$ can be obtained as a fixed point of \eqref{integral-eq}  to
the nonlinear operator
$$
\cF_n(\vp)(t,\xi)\equiv\vp_0(\xi)e^{-\g_2^nt}+\int_0^te^{-\g_2^n(t-\tau)}\cG_n(\vp(\tau))(\xi)d\tau,
$$
for a fixed $\varphi_0 \in \wt\cM^\a$.
 For a fixed $T >0$  to be determined later, denote
\begin{align*}
X_n=\{\vp(t,\xi)\in  C([0,T],\wt\cM^\a):\,\, &
\vp(0,\xi) = \vp_0(\xi), \\
\forall t\in[0,T], &||1-\vp(t,\cdot)||_\b\le e^{\lam_\b^n t}||1-\vp(0,\cdot)||_\b \}
\end{align*}
supplemented with the metric
\begin{align*}
||\vp-\tilde\vp||_{X_n}=\sup_{t \in[0,T]}   dis_{\a,\b,\e}( \vp(t), \tilde\vp(t)),
\end{align*}
for  $\vp,\tilde\vp\in X_n$.

Firstly, the following lemma gives a local in time
existence of solution in $X_n$.

\begin{lemm}\label{contr-lemm}
	There exists $T>0$, such that $\cF_n:{X_n}\to {X_n}$ is a contraction mapping.
\end{lemm}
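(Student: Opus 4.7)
The goal is to establish two properties of $\mathcal{F}_n$ on $X_n$: (i) invariance $\mathcal{F}_n(X_n) \subset X_n$, and (ii) contraction with respect to $\|\cdot\|_{X_n}$, both valid provided $T > 0$ is chosen sufficiently small.

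For invariance, I first observe that $\mathcal{F}_n(\varphi)(0) = \varphi_0$ is immediate from the integral formula, and the continuity $t \mapsto \mathcal{F}_n(\varphi)(t)$ in $\wt\cM^\a$ follows from the continuity of the integrand together with dominated convergence applied to the $\wt\cM^\a$-integral. The crucial task is to verify the $\b$-norm growth bound defining $X_n$. Using $\cG_n(1) \equiv \g_2^n$, I rewrite
\begin{equation*}
\mathcal{F}_n(\varphi)(t) - 1 = (\varphi_0 - 1)e^{-\g_2^n t} + \int_0^t e^{-\g_2^n(t-\tau)}\bigl(\cG_n(\varphi(\tau)) - \g_2^n\bigr)\,d\tau.
\end{equation*}
Applying \eqref{g-operator-1} with $\tilde\varphi \equiv 1$ gives $\|\cG_n(\varphi(\tau)) - \g_2^n\|_\b \leq \g_\b^n\|\varphi(\tau)-1\|_\b \leq \g_\b^n e^{\lam_\b^n \tau}\|\varphi_0-1\|_\b$, and a direct integration using $\lam_\b^n = \g_\b^n - \g_2^n$ recovers exactly $\|1-\mathcal{F}_n(\varphi)(t)\|_\b \leq e^{\lam_\b^n t}\|\varphi_0-1\|_\b$. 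Finiteness of $\|1-\mathcal{F}_n(\varphi)(t)\|_{\wt\cM^\a}$ follows analogously from the $\tilde\varphi = 1$ case of Lemma~\ref{lemm01}, combined with the $\b$-control just obtained.

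For the contraction, I start from
\begin{equation*}
\mathcal{F}_n(\varphi)(t) - \mathcal{F}_n(\tilde\varphi)(t) = \int_0^t e^{-\g_2^n(t-\tau)}\bigl(\cG_n(\varphi(\tau)) - \cG_n(\tilde\varphi(\tau))\bigr)\,d\tau,
\end{equation*}
use \eqref{g-operator-1} to bound the $\b$-component of $dis_{\a,\b,\e}$ by $\g_\b^n T \sup_\tau \|\varphi(\tau)-\tilde\varphi(\tau)\|_\b$, and use \eqref{g-operator-2} to bound the $\wt\cM^\a$-component by a combination of $T\g_\a^n\|\varphi-\tilde\varphi\|_{\wt\cM^\a}$, $CTM\|\varphi-\tilde\varphi\|_\b$, $CT\|\varphi-\tilde\varphi\|_\b$, and $CTM^{1-\e}\|\varphi-\tilde\varphi\|_\b^\e$, where $M := e^{\lam_\b^n T}\|1-\varphi_0\|_\b$ is a uniform bound on $\|1-\varphi\|_\b$ and $\|1-\tilde\varphi\|_\b$ valid for $\varphi,\tilde\varphi\in X_n$. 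Each component of $\|\mathcal{F}_n(\varphi)-\mathcal{F}_n(\tilde\varphi)\|_{X_n}$ is therefore majorized by a $T$-dependent coefficient times $\|\varphi-\tilde\varphi\|_{X_n}$, and every such coefficient vanishes as $T \to 0$.

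The main subtlety is handling the $\|\cdot\|_\b^\e$ term that appears both in the metric $dis_{\a,\b,\e}$ and in the estimate \eqref{g-operator-2}. The key algebraic fact is that for $A \in (0,1)$ and $\e \in (0,1)$ one has $A \leq A^\e$, so a $\b$-norm contraction $\|f\|_\b \leq A\|g\|_\b$ with $A < 1$ automatically yields $\|f\|_\b + \|f\|_\b^\e \leq A^\e\bigl(\|g\|_\b + \|g\|_\b^\e\bigr)$, i.e.\ a contraction factor $A^\e < 1$ in the sum. Combining this with smallness of $T$ for the $\wt\cM^\a$-coefficient and for the mixed terms $CTM$, $CT$, $CTM^{1-\e}$ produces a single contraction constant $c(T) < 1$, completing the argument.
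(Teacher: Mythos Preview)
Your proposal is correct and follows essentially the same route as the paper: invariance via the identity $\cG_n(1)=\g_2^n$ together with \eqref{g-operator-1} to recover the exact $\b$-growth bound, and contraction via \eqref{g-operator-1}--\eqref{g-operator-2} with coefficients that vanish as $T\to 0$. Your explicit handling of the $\|\cdot\|_\b^\e$ component through the inequality $A\le A^\e$ for $A\in(0,1)$ is precisely what underlies the paper's factor $(\g_\b^n T)^\e$ in the combined contraction constant; the only place where the paper is slightly more detailed is the continuity $t\mapsto\cF_n(\vp)(t)$ in $\wt\cM^\a$, which it verifies by an explicit splitting rather than invoking dominated convergence.
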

\begin{proof}
	Firstly, we prove that $\cF_n$ maps ${X_n}$ into itself. By Lemma \ref{lemm01}, for any $\vp\in {X_n}$, we have
\begin{align*}
  &\sup_{t\in[0,T]}||\cF_n(\vp(t,\cdot))-1||_{\wt\cM^\a}\\
  &\le||\vp_0-1||_{\wt\cM^\a}+\sup_{t\in[0,T]}\int_0^t||\cG_n(\vp(\tau,\cdot))-\g_2^n||_{\wt\cM_\a}d\tau\\
  &\le||\vp_0-1||_{\wt\cM^\a}+\int_0^T\g_\a^n||\vp(\tau)-1||_{\wt\cM^\a}+C||\vp(\tau)-1||_\b^2+C||\vp(\tau)-1||_\b d\tau\\
  &\le||\vp_0-1||_{\wt\cM^\a}+\g_\a^n T\sup_{t\in[0,T]}||\vp(t)-1||_{\wt\cM^\a}+CT(||\vp_0-1||_\b e^{\lam_\b^n T}+1)^2\\
  &<\infty.\\
\end{align*}
To show $\cF_n(\vp)\in {X_n}$, we also need to check that
\[
\begin{split}
||1-\cF_n(\vp)||_\b
&\leq||1-\vp_0||_\b e^{-\g_2^nt}+\int_0^t e^{-\g_2^n(t-\tau)}||\cG(\vp(\cdot,\tau))-\cG(1)||_\b d\tau\\
&\leq||1-\vp_0||_\b e^{-\g_2^nt}+\g_\b^n\int_0^t e^{-\g_2^n(t-\tau)}||\vp(\cdot,\tau)-1||_\b d\tau\\
&\leq||1-\vp_0||_\b e^{-\g_2^nt}+\g_\b^n\int_0^t e^{-\g_2^n(t-\tau)}e^{\lam_\b^n \tau}||\vp_0-1||_\b d\tau\\
&=e^{\lam_\b^n t}||1-\vp_0||_\b.
\end{split}
\]
Moreover, assume that
$\vp(t,\xi) \in C([0,T]; \wt \cM^\alpha)$. 
Since it follows from the definition of $\cF_n$  that
\begin{align*}
\cF_n(\vp)(t,\xi) -\cF_n(\vp)(s,\xi)&= \Big(\vp_0(\xi)-1\Big) \Big(e^{-\g_2^nt}- e^{-\g_2^ns}\Big)\\
&+\int_s^te^{-\g_2^n(t-\tau)}\Big( \cG_n(\vp(\tau))(\xi) - \g_2^n \Big)d\tau\\
&= I(t,s,\xi) + II(t,s,\xi),
\end{align*}
we have  $\cF_n(\vp)(t,\xi) \in C([0,T]; \wt \cM^\alpha)$.
 In fact, as for $\cK^\alpha$-norm, we have
\[\Big(\sup_{\xi}
\frac{|I|}{|\xi|^\alpha} \Big)(t,s) \le \Big|e^{-\g_2^nt}- e^{-\g_2^ns}\Big|\sup_{\xi} \frac{|\vp_0(\xi)-1|}{|\xi|^\alpha}  \rightarrow 0 \enskip \mbox{as} \enskip |t-s| \rightarrow 0,
\]
and 
\[
\Big(\sup_{\xi}\frac{|II| }{|\xi|^\alpha}\Big)(t,s)
\le \int_s^t \frac{\big| \cG_n(\vp(\tau))(\xi) - \g_2^n \big| }{|\xi|^\alpha}d\tau
\le |t-s| \g^n_\alpha \sup_{\tau \in [0,T]} \|1-\vp(\tau)\|_\alpha\,.
\]
The proof for $\wt \cM_\alpha$-norm is similar so that we omit the
detail. 

Secondly, to prove $\cF_n$ is a contraction mapping, we apply Lemma \ref{lemm01} again to have that 
for any $\vp,\tilde\vp\in X_n$,
\[\begin{split}
&\sup_{t\in[0,T]}||\cF_n(\vp(t,\cdot))-\cF_n(\tilde\vp(t,\cdot))||_{\wt\cM^\a}\\
&\le\sup_{t\in[0,T]}\int_0^t||\cG_n(\vp(\tau,\cdot))-\cG_n(\vp(\tau,\cdot))||_{\wt\cM_\a}d\tau\\
&\le\int_0^T\g_\a^n||\vp(\tau)-\tilde\vp(\tau)||_{\wt\cM_\a}+C\max\{||1-\vp(\tau)||_\b,||1-\tilde\vp(\tau)||_\b\}\cdot ||\vp(\tau)-\tilde\vp(\tau)||_\b\\
&\quad+C||\vp(\tau)-\tilde\vp(\tau)||_\b+C||1-\tilde\vp(\tau)||_\b^{1-\ve}||\vp(\tau)-\tilde\vp(\tau)||_\b^\ve d\tau\\
&\le\g_\a^nT\sup_{t\in[0,T]}||\vp-\tilde\vp||_{\wt\cM_\a}+C\int_0^T(e^{\lam_\b^n \tau} ||1-\vp_0||_\b+1)d\tau\cdot \sup_{t\in[0,T]}||\vp-\tilde\vp||_\b\\
&\quad+CT\sup_{t\in[0,T]}||\vp-\tilde\vp||_\b+C||1-\vp_0||_\b^{1-\ve}\int_0^Te^{\lam_\b^n(1-\e)\tau}d\tau\sup_{t\in[0,T]}||\vp-\tilde\vp||_\b^\ve \\
&\le\g_\a^nT\sup_{t\in[0,T]}||\vp-\tilde\vp||_{\wt\cM_\a}+CT(e^{\lam_\b^n T} ||1-\vp_0||_\b+1)\cdot \sup_{t\in[0,T]}||\vp-\tilde\vp||_\b\\
&\quad+CT\sup_{t\in[0,T]}||\vp-\tilde\vp||_\b+CT||1-\vp_0||_\b^{1-\ve}e^{\lam_\b^n(1-\e)T}\sup_{t\in[0,T]}||\vp-\tilde\vp||_\b^\ve \\
&\le(\g_\a^n+C)(3+ ||1-\vp_0||_\b+||1-\vp_0||_\b^{1-\ve})e^{\lam_\b^n T}T\sup_{t\in[0,T]}dis_{\a,\b,\e}(\vp,\tilde\vp).
\end{split}\]
This together with the known estimate
$$\sup_{t\in[0,T]}||\cF_n(\vp(t,\cdot))-\cF_n(\tilde\vp(t,\cdot))||_\b\le \g_\b^nT\sup_{t\in[0,T]}||\vp(t,\cdot)-\tilde\vp(t,\cdot)||_\b,$$
we have
\begin{align}\label{contraction-ineq}
&\sup_{t\in[0,T]}dis_{\a,\b,\e}(\cF_n(\vp)-\cF_n(\tilde\vp))\\
&\quad\le[C_n(0)e^{\lam_\b^n T}T+\g_\b^nT+(\g_\b^n T)^{\e}]\sup_{t\in[0,T]}dis_{\a,\b,\e}(\vp,\tilde\vp),\nonumber
\end{align}
where $C_n(0)=(\g_\a^n+C)(3+ ||1-\vp_0||_\b+||1-\vp_0||_\b^{1-\ve})$. Therefore, $\cF_n:X_n\to X_n$ is a contraction mapping if we select $T>0$ small enough such that 
\begin{align}\label{contra-condi}
C_n(0)e^{\lam_\b^n T}T+\g_\b^n T+(\g_\b^nT)^{\e}<1.
\end{align}
And then it completes the proof of the lemma.
\end{proof}
Lemma \ref{contr-lemm} shows that there exists a unique solution $\vp(t,\xi)\in C([0,T_1],\wt\cM_\a)$ to the problem \eqref{c-p-fourier} under the cut-off assumption, for some $T_1>0$  depending 
on the initial datum $\vp_0$. To extend the solution to be global
in time, we can apply the above argument for the initial data $\vp(T_1,\xi)$, then one sufficient condition for $\cF_n$ to be a contraction mapping in the next 
time interval $t\in[T_1,T_2]$ is 
\begin{align*}
C_n(0)e^{\lam_\b^n (T_1+T_2)}T_2+\g_\b^nT_2+(\g_\b^nT_2)^{\e}<1,
\end{align*}
where we have used the fact that $C_n(T_1)\le C_n(0)e^{\lam_\b^n T_1}$. This 
process can be continued. Assume the length of $m$-th extension of time interval
is $T_m$, we can select the sequence $\{T_m\}_{m=1}^\infty$ as follows:
$$
C_n(0)e^{\lam_\b^n (T_1+T_2+\cdots+T_m)}T_m+\g_\b^nT_m+(\g_\b^nT_m)^{\e}=\frac12\,,\quad\forall m\in\bN.
$$
It is straightforward to check that $\sum_{m=1}^\infty T_m=\infty$.

\begin{rema}
Although we only obtain the solution in $X_n\subsetneq C([0,T],\wt\cM^\a)$, the fixed point is unique in $C([0,T],\wt\cM^\a)$. Assume $\p\in C([0,T],\wt\cM^\a)$ is a fixed point to the operator $\cF_n$. Then
$$
\p(t,\xi)=\vp_0(\xi)e^{-\g_2^nt}+\int_0^te^{-\g_2^n(t-\tau)}\cG_n(\p(\tau))(\xi)d\tau.
$$
Let $t=0$, we have $\p(0,\xi)=\vp_0$. Moreover,
\begin{align*}
e^{\g_2^nt}||\p(t,\cdot)-1||_\b&\le||\p_0-1||_\b+\int_0^te^{\g_2^n\tau}||\cG_n(\p(\tau))-\cG_n(1)||_\b d\tau\\
&\le||\p_0-1||_\b+\g_\b^n\int_0^te^{\g_2^n\tau}||\p(\tau)-1||_\b d\tau.
\end{align*}
Applying the Gronwall inequality yields
$$||\p(t,\cdot)-1||_\b\le e^{\lam_\b^n t}||\p_0-1||_\b,$$
that is , $\p\in X_n$.
\end{rema}

\subsection{Stability under cutoff assumption}
Let $\vp(t,\xi),\tilde\vp(t,\xi)\in C([0,T],\wt\cM_\a)$ be two solutions to the equation with cut-off cross section and with initial data $\vp_0,\tilde\vp_0$, respectively. 

Let $H(t,\xi)=\vp(t,\xi)-\tilde\vp(t,\xi)$, then it is known from the
previous works that 
\begin{equation}\label{k-b-stability}
||\vp(t,\xi)-\tilde\vp(t,\xi)||_\b\leq e^{\lam_\b^n t}||\vp_0-\tilde\vp_0||_\b,\quad\text{ for all }t>0.
\end{equation}
Starting from the equation, we can obtain
\begin{align}\label{3.2}
e^{\g_2^n t}\int\frac{|ReH(t,\xi)|}{|\xi|^{3+\a}}d\xi
\leq &\int\frac{|ReH(0,\xi)|}{|\xi|^{3+\a}}d\xi\\
&+\int_0^t e^{\g_2^n \tau}\int\frac{|Re\cG(\vp(\tau))-Re\cG(\tilde\vp(\tau))|}{|\xi|^{3+\a}}d\xi d\tau.\nonumber
\end{align}
By Lemma \ref{lemm01} and \eqref{k-b-stability}, we have
\[
\begin{split}
\int&\frac{|Re\cG(\vp(\tau))-Re\cG(\tilde\vp(\tau))|}{|\xi|^{3+\a}}d\xi\\
&\leq\g_\a^n \int \frac{|ReH(\tau)|} {|\xi|^{3+\a}}d\xi+ Ce^{2\lam_\b^n \tau}\max\{||1-\vp_0||_\b,||1-\tilde\vp_0||_\b\}\cdot ||\vp_0-\tilde\vp_0||_\b\\
&\quad+Ce^{\lam_\b^n \tau}||\vp_0-\tilde\vp_0||_\b+Ce^{\lam_\b^n \tau }||1-\tilde\vp_0||_\b^{1-\ve}||\vp_0-\tilde\vp_0||_\b^\ve.\\
\end{split}
\]
 To simplify the calculation, define the functions
\[
f(t)=e^{\g_2^n t}\int\frac{|ReH(t,\xi)|}{|\xi|^{3+\a}}d\xi,
\]

\[
g(t)=A\int_0^t e^{(\g_\b^n+\lam_\b^n) \tau }d\tau+B\int_0^t e^{\g_\b^n \tau }d\tau,
\]
where
\begin{align*}
A&=C\max\{||1-\vp_0||_\b,||1-\tilde\vp_0||_\b\}\cdot ||\vp_0-\tilde\vp_0||_\b,\\
B&=C(||\vp_0-\tilde\vp_0||_\b+||1-\tilde\vp_0||_\b^{1-\ve}||\vp_0-\tilde\vp_0||_\b^\ve).
\end{align*}
Then, (\ref{3.2}) becomes
$$
f(t)\leq f(0)+\g_\a^n\int_0^t f(\tau)d\tau+g(t).
$$
Applying the Gronwall inequality gives
\begin{align}\label{cutoff-stability}
\int\frac{|ReH(t,\xi)|}{|\xi|^{3+\a}}d\xi
\leq e^{\lam_\a^n t}\int\frac{|ReH(0,\xi)|}{|\xi|^{3+\a}}d\xi+\frac{e^{2\lam_\b^n t}-e^{\lam_\a^n t}}{2\lam_\b^n-\lam_\a^n}A+\frac{e^{\lam_\b^n t}-e^{\lam_\a^n t}}{\lam_\b^n-\lam_\a^n}B.
\end{align}
In particular, if $\tilde\vp=1$, we have
$$
||\vp(t,\xi)-1||_{\wt\cM^\a}
\leq e^{\lam_\a^n t}||\vp_0-1||_{\wt\cM^\a}+\frac{e^{2\lam_\b^n t}-e^{\lam_\a^n t}}{2\lam_\b^n-\lam_\a^n}A_0+\frac{e^{\lam_\b^n t}-e^{\lam_\a^n t}}{\lam_\b^n-\lam_\a^n}B_0,
$$
where $A_0=C||\vp_0-1||_\b^2\,,B_0=C||\vp_0-1||_\b$.

\subsection{Existence and stability  without cut-off assumption}\label{3-2}
Assume $b(\cdot)$ satisfies (\ref{index-sing}). Let $b_n=\min\{b,n\}$. For any $\vp_0\in \wt\cM^\a\subset\cK^\b,\a\in(\a_0,2)$ , we have a unique solution $\vp_n(t,\xi)\in C([0,\infty);\wt\cM^\a)$ to the Cauchy problem \eqref{c-p-fourier} with $b$ replaced by $b_n$.
From the $\cK^\b$-theory (\cite{Cannone-Karch-1,morimoto-12}), we know
$$
||\vp_n(t,\cdot)-1||_\b\le e^{\lam_\b^n t}||\vp_0-1||_\b
\le e^{\lam_\b t}||\vp_0-1||_\b,\mbox{ for all }\b\in(\a_0,\a],
$$ 
and it is proved that the sequence $\{\vp_n(t,\xi)\}_{n=1}^\infty$ is bounded and equicontinuous. Therefore,  by 
 Ascoli-Arzel\`a theorem, there exists a subsequence of solutions, denoted by $\{\vp_n\}$ again, which converges uniformly in every compact set of $[0,\infty)\times\bR^3$. Moreover, the limit function 
$$\vp(t,\xi)=\lim_{n\to\infty}\vp_n(t,\xi)$$
 is  a characteristic function in $\cK^\a$ and it is a solution to the problem \eqref{c-p-fourier} with the initial data $\vp(0,\xi)=\vp_0$.

To prove $\vp(t,\xi)\in \wt\cM^\a$, by letting $\tilde\vp=1$, we obtain from the stability estimate that for any $0<\d<1$
\begin{align*}
\int_{\d<|\xi|<\d^{-1}}\frac{|Re\vp-1|}{|\xi|^{3+\a}}d\xi
&=\lim_{n\to\infty}\int_{\d<|\xi|<\d^{-1}}\frac{|Re\vp_n-1|}{|\xi|^{3+\a}}d\xi
\\
&\leq\lim_{n\to\infty} e^{\lam_\a^n t}||\vp_0-1||_{\wt\cM^\a}+\frac{e^{2\lam_\b^n t}-e^{\lam_\a^n t}}{2\lam_\b^n-\lam_\a^n}A_0+\frac{e^{\lam_\b^n t}-e^{\lam_\a^n t}}{\lam_\b^n-\lam_\a^n}B_0\\
&\le e^{\lam_\a t}||\vp_0-1||_{\wt\cM^\a}+te^{2\lam_\b t}A_0+te^{\lam_\b t}B_0.
\end{align*}
Letting $\delta \rightarrow 0$, we obtain $\vp(t,\xi) \in \wt\cM^\a$ for each $t>0$. Similarly, the stability estimate \eqref{fourier-stability} follows from \eqref{cutoff-stability} by letting $n\to\infty$.

To complete the proof of Theorem  \ref{fourier-space},
we will show $\vp(t,\xi) \in C([0,\infty), \wt\cM^\a)$. For any $t,s>0$, we have
\begin{align*}
&Re\vp(t,\xi)-Re\vp(s,\xi)\\
&=\int_s^t\int_{\bS^2}
b(\cdot)\{(Re\vp(\tau,\xi^+)Re\vp(\tau,\xi^-)-Re\vp(\tau,\xi))d\s d\tau\\
&\quad-\int_s^t\int_{\bS^2}b(\cdot)Im\vp(\tau,\xi^+)Im\vp(\tau,\xi^-)\}d\s d\tau\\
&=I-II.\\
\end{align*}
For  $I$, by Proposition 3.3 in \cite{MWY-2014}, we obtain
\[
\int_{\bR^3}\frac{|I|}{|\xi|^{3+\a}}d\xi\lesssim |t-s|e^{\lam_\a\max\{s,t\}}||\vp_0-1||_{\wt\cM^\a}.
\]
For  $II$,
\begin{align*}
\int_{\bR^3}\frac{|II|}{|\xi|^{3+\a}}d\xi&\le\int_s^t \int_{\bS^2}b(\cdot)\left\{\int_{|\xi|\ge1}
\frac{||1-
	\vp(\tau,\xi^-)||_\b}{|\xi|^{3+\a-\b}}\sin^\b\frac\theta2 \right.\\
&\qquad\qquad\qquad\left.+\int_{|\xi|<1}\frac{||1-
	\vp(\tau,\xi^-)||_\b^2}{|\xi|^{3+\a-2\b}}\sin^\b\frac\theta2\cos^\b\frac\theta2\right\}d\s d\tau\\
&\lesssim (e^{\lam_\b\max\{s,t\}}||1-\vp_0||_\b
+1)^2|t-s|.\\
\end{align*}
Therefore,
$$
||\vp(t)-\vp(s)||_{\wt\cM^\a}\lesssim C(t,s)|t-s|,
\mbox{ for all }t,s>0,
$$
where $C(t,s)=e^{\lam_\b \max\{s,t\}}||\vp_0-1||_{\wt\cM^\a}+(e^{\lam_\b \max\{s,t\}}||1-\vp_0||_\b
+1)^2$.
And then it completes the proof of Theorem \ref{fourier-space}.

\subsection{Proof of Theorem \ref{existence-base-space}} 
We are now ready to complete the proof of Theorem \ref{existence-base-space}.
Assume $b$ satisfies \eqref{index-sing} for some $\a_0\in(0,2)$ and let  $\a\in[\a_0,2)$. If $F_0\in P_\a(\bR^3)$, then $\vp_0=\cF(F_0)\in\wt\cM^\a$. By Theorem \ref{fourier-space} and \eqref{equivalence-space}, there exists a unique measure valued solution $F_t\in C([0,\infty),P_\a(\bR^3))$ to the problem \eqref{bol}-\eqref{initial}. The continuity with respect to $t$ is in the following sense:
\begin{align}\label{p-a-continuity}
&\lim_{t \rightarrow t_0} \int \psi(v) dF(t,v) = \int \psi(v) dF(t_0,v),  
\mbox{ for any  $\psi \in C(\RR^3)$} \\
&\mbox{\qquad  \qquad \qquad
	satisfying
	the growth condition $|\psi(v)| \lesssim \la v \ra^\a$.}\notag
\end{align}
This is true because from \eqref{fourier-continuity-1}
and \eqref{fourier-continuity-2} , we have 
$$
dis_{\a,\b,\e}(\vp(t),\vp(t_0))\to0,\mbox{ as }t\to t_0.
$$
Then \eqref{p-a-continuity} follows from Theorem \ref{complete-1}, so that
 Theorem \ref{existence-base-space} holds.


\bigskip
\noindent
{\bf Acknowledgements:} The research of the first author was supported in part
by  Grant-in-Aid for Scientific Research No.25400160,
Japan Society for the Promotion of Science. The research of the third author was
supported in part by the General Research Fund of Hong Kong, CityU No. 11303614.
Authors would like to thank Yong-Kum Cho for his comments
on our previous work \cite{MWY-2014}.


\end{document}